\documentclass[12pt]{amsart}

\usepackage{graphicx} 

\usepackage{color}
\usepackage{amsmath}
\usepackage{amssymb}
\usepackage{amsbsy}
\usepackage{amsthm}

\usepackage{amsfonts}
\usepackage{tikz}

\newtheorem{lemma}{Lemma}[section]%
\newtheorem{theorem}[lemma]{Theorem}%
\newtheorem{definition}[lemma]{Definition}%
\newtheorem{hypothesis}[lemma]{Hypotheses}%
\newtheorem{notation}[lemma]{Notation}%
\newtheorem{problem}{Problem}%
\newtheorem{question}[problem]{Question}%
\newtheorem{proposition}[lemma]{Proposition}%
\newtheorem{remark}[lemma]{Remark}%

\title{Structure of Cayley Codes}
\author{Vishnuram Arumugam, Cheryl E. Praeger, Daniel Rademacher}
\address{The University of Western Australia, Perth WA 6009 {\text{\rm(Arumugam and  Praeger)}}; RWTH Aachen University, Aachen 52062, Germany  {\text{\rm(Rademacher)}}}
\email{vishnuram.arumugam@research.uwa.edu.au {\text{\rm(Arumugam)}};\newline  cheryl.praeger@uwa.edu.au {\text{\rm(Praeger)}};   daniel.rademacher@rwth-aachen.de {\text{\rm(Rademacher)}}}
\date{June 2026}

\newcommand{\Cay}{\operatorname{Cay}}

\def\F{\mathbb{F}}
\def\FF{\mathcal{F}}
\def\T{\mathcal{T}}
\def\Sym{{\rm Sym}}
\def\Cay{{\rm Cay}}

\def\Aut{{\rm Aut}}

\def\la{\langle}
\def\ra{\rangle}
\def\C{\mathbf{C}}

\def\hatpsi{\widehat{\psi}}
\def\Ecirc{E^\circ}

\begin{document}

\begin{abstract}
	\noindent
	
	Cayley codes, introduced by Kaufman and Wigderson, are linear codes constructed from a Cayley graph and a smaller linear code. We explore general properties of the class of Cayley codes for finite groups. In particular we give a reduction to Cayley codes for connected Cayley graphs that maintains code properties such as rate, minimum distance and symmetry. Also, for  a given Cayley code, we identify a family of symmetric Cayley codes, each associated with a normal edge-transitive Cayley graph, such that the given Cayley code embeds into the direct sum of the symmetric Cayley codes. We analyse several families of examples, in particular studying the behaviour of the Cayley code construction under forming direct products and cartesian products of Cayley graphs, and we pose a number of open questions.
	
	\medskip\noindent
	{\bf Key-words:}\quad Cayley graph, Cayley code, linear code, graph symmetry
	
	\medskip\noindent
	{\bf 2020 Mathematics Subject Classification:}\quad 05B05, 20B25, 05B25
	
	\medskip\noindent
	{\bf {Corresponding author}:}\quad Cheryl E Praeger
	
\end{abstract}

\maketitle

\section{Introduction}

In 2010, Kaufman and Wigderson introduced a new construction of linear codes, which they called Cayley codes, taking as input a Cayley graph and a smaller linear code  (see~\cite{KW2010}, or  \cite[Definition 19]{KW2016} for the journal version). They proved~\cite[Theorem 7]{KW2016} that, for suitable input graphs and codes, the Cayley code construction preserves certain bounds on the rate or relative distance, and  desirable code properties such as being symmetric. They presented explicit families of Cayley codes based on Cayley graphs of elementary abelian $2$-groups~\cite[Section 4]{KW2016}, obtaining codes of length $n$ which were `almost good' in the sense that the relative distance was $O((\log\log n)^{-2})$, rather than being strictly bounded away from zero, see ~\cite[Theorem 11]{KW2016}. Kaufman and Lubotzky \cite{KL2012} commented in 2012 that `\emph{it seems that all the known explicit constructions of symmetric codes give codes which are not good}', and provided, via the Cayley code construction, the first explicit infinite family of good binary symmetric codes based on the edge-transitive bipartite Ramanujan graphs -- particular Cayley graphs for the projective linear groups ${\rm PGL}_2(q)$,~\cite{LPS88, LSV05}  (see Notation~\ref{not:def} for definitions of  technical terms such as  `good'). Becker~\cite{B2016}  explored the impact of the expansion and symmetry properties of Ramanujan graphs on the properties of these Cayley codes; while Kaufman and Oppenheim \cite{MR4602836} developed a higher dimensional version.   

The aim of this paper is a little different. We explore general properties of the class of Cayley codes for finite groups. We show how to reduce to Cayley codes for connected Cayley graphs while maintaining the same rate and minimum distance and symmetry propertties (Theorem~\ref{thm:disccaycode}). We identify a family of symmetric Cayley codes related to an arbitrary given  Cayley code (Theorem~\ref{t:SiBi}). Normal edge-transitive Cayley graphs introduced in \cite{P99} play a critical role in the second reduction. Finally we give several families of examples, in particular exploring  the behaviour of the Cayley code construction under the Cartesian product construction for graphs (Theorem \ref{t:cprod}). 

We give broad statements of the major results in Subsection~\ref{s:results}, and to facilitate these statements we introduce the relevant concepts in Notation~\ref{not:def}. The final Subsection~\ref{s:future} briefly discusses some open questions about the structure of finite Cayley codes.

\begin{notation}\label{not:def}{\rm 
		A \emph{linear $[n,k,d]$-code} over a finite field $\F$ is a $k$-dimens\-ional subspace $C$ of $\F^X$, where $|X|=n$,  such that the minimum (Hamming) weight of the non-zero \emph{codewords} (elements of $C$) is $d$. A  family $\FF$ of such codes with length $n\to\infty$ is called \emph{good} if there exists $\epsilon>0$ such that the \emph{rate} $r(C):=\frac{k}{n}$ and the \emph{relative distance} $\delta(C):=\frac{d}{n}$ are both at least $\epsilon$. The \emph{automorphism group} $\Aut(C)$ of a linear code $C<\F^X$ is the subgroup of the symmetric group $\Sym(X)$ that leaves $C$ invariant in its induced action on $\F^X$, and $C$ is called \emph{symmetric} if $\Aut(C)$ is  transitive on $X$. We regard $\F^X$ as the $\F$-vector-space of functions $f:X\to \F$, so that in particular, for a subset $Y\subseteq X$, the restriction map $f\to f|_Y$ defines a natural projection $\F^X\to \F^Y$.
		
		For a group $G$ and an inverse-closed subset $S\subseteq G\setminus\{1\}$, the \emph{Cayley graph} $\Gamma=\Cay(G,S)=(G,E)$ 
		is the graph with vertex-set $V\Gamma=G$, and edge-set $E\Gamma=E:=\{\{g,sg\}\mid g\in G, s\in S\}$. For $g\in G$, the edge-subset 
		\begin{equation}\label{e:eg}
			E_g:=\{ \{g,sg\}\mid s\in S\}\quad \text{is the set of edges incident with $g$,}
		\end{equation}
		the map 
		\begin{equation}\label{e:phig}
			\phi_g:S\to E_g \quad\text{given by}\quad \phi_g:s\to \{g,sg\}\quad \text{for $s\in S$}
		\end{equation}
		is a bijection, and we obtain a linear projection $\phi: \F^E\to \F^S$, where $\phi:f\to \phi_g\circ f$.
		The \emph{Cayley code} $\C(G,S,B)$ corresponding to a given Cayley graph $\Cay(G,S)$ and linear code $B\leq \F^S$, is 
		\begin{equation}\label{e:cc}
			\C(G,S,B):=\{ f\in\F^E \mid \forall\ g\in G,\ \phi_g\circ f\in B \}.
		\end{equation}
		That $\C(G,S,B)$ is a linear code in $\F^E$ follows from the fact that $B$ is a linear code in $\F^S$. The automorphism group $\Aut(\C(G,S,B))$ contains the group $G\rtimes A(G,S,B)$, where 
		\begin{equation} \label{e:Autcc}
			A(G,S,B)=
			\{ \sigma\in\Aut(G)\mid S^\sigma=S, \sigma^S\in\Aut(B)\} 
		\end{equation}
		and in particular, if $G\rtimes A(G,S,B)$ is transitive on the edge-set $E$ then   $\Cay(G,S)$ is normal edge-transitive and also $\C(G,S,B)$ is symmetric (Proposition~\ref{p:cayc-props} - this strengthens a result \cite{KL2012, KW2010, KW2016}, see Remark~\ref{rem:strong}).
	}
\end{notation}

\subsection{The main results}\label{s:results}

A Cayley graph $\Cay(G,S)$ is disconnected if and only if  the subgroup $H:=\langle S \rangle$ is a proper subgroup of $G$ and the connected component of $\Cay(G,S)$ containing the identity is $\Cay(H,S)$ (Lemma~\ref{lem:cay1}). Thus we can form both Cayley codes $\C(G,S,B)$ and $\C(H,S,B)$, for appropriate $B$, and our first result shows how they are related. 

\begin{theorem}\label{thm:disconn}
	Let $G$ be a finite group and $S$ a non-empty inverse-closed subset of $G\setminus\{1\}$ such that $H:=\la S\ra \ne G$. Then for each linear code $B\leq \F^S$, 
	\begin{enumerate}
		\item[(a)]  the Cayley code $\C(G,S,B)$ decomposes as a direct sum of $|G:H|$ linear codes, each isomorphic to   $\C(H,S,B)$.
		
		\item[(b)] Moreover,  $\C(H,S,B)$ and $\C(G,S,B)$ have the same rate and minimum distance,  and  $\C(H,S,B)$ is symmetric if and only if $\C(G,S,B)$ is symmetric.
	\end{enumerate}
\end{theorem}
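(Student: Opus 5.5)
We partition the edge set along the right cosets of $H$. Since every edge of $\Cay(G,S)$ has the form $\{g,sg\}$ with $s\in S\subseteq H$, both endpoints lie in the same right coset $Hg$. Choose a right transversal $T$ for $H$ in $G$. Then $E=\bigsqcup_{t\in T}E^{(t)}$, where $E^{(t)}$ is the set of edges with both ends in $Ht$, and also $E_g\subseteq E^{(t)}$ whenever $g\in Ht$. Consequently $\F^E=\bigoplus_{t\in T}\F^{E^{(t)}}$. The defining conditions $\phi_g\circ f\in B$ for $g\in Ht$ involve only $f|_{E^{(t)}}$. Hence
\[
\C(G,S,B)=\bigoplus_{t\in T} C_t,\qquad C_t:=\{f\in \F^{E^{(t)}}\mid \phi_g\circ f\in B \ \forall g\in Ht\}.
\]
Right multiplication $\rho_t:x\mapsto xt$ maps $H$ to $Ht$. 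It sends the edge $\{h,sh\}$ to $\{ht,sht\}$, so it induces a bijection $E(\Cay(H,S))\to E^{(t)}$ that commutes with the labellings: $\rho_t\circ\phi_h=\phi_{ht}$. Pulling back along this bijection carries $C_t$ onto $\C(H,S,B)$, which proves (a).

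For (b), let $C_H:=\C(H,S,B)$ have parameters $[n,k,d]$. By (a), $\C(G,S,B)$ has length $mn$ and dimension $mk$, where $m=|G:H|$, so the rates agree. A nonzero codeword of the direct sum has nonzero component in some $C_t$, so its weight is at least $d$. Conversely, a minimum-weight word of $C_H$ placed in one summand, with zeros elsewhere, has weight $d$. Hence the minimum distances agree (both equal $d$; the relative distances then differ, but the statement only claims the minimum distance).

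The symmetry equivalence is the main obstacle. For "$C_H$ symmetric $\Rightarrow$ $\C(G,S,B)$ symmetric", take $\Aut(C_H)$ acting on each summand via the identifications $\rho_t$, which preserves the code. Combine it with right multiplication by $G$, which permutes the summands and preserves $\C(G,S,B)$, since $\rho_g$ respects the $\phi$-labelling. Together these give a transitive group on $E$. For the converse, let $K=\Aut(\C(G,S,B))$ be transitive on $E$. We want to show $\Aut(C_H)$ is transitive on $E(\Cay(H,S))$. The difficulty is that $K$ need not preserve the block system $\{E^{(t)}\}$, since a code automorphism need not respect the graph. The plan is to recover the blocks intrinsically from the code. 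A natural first try is the support of the minimal codewords, or the connected components of the "dependency structure" of the code: decompose $\C(G,S,B)$ as a direct sum of indecomposable codes and use uniqueness of the decomposition of a linear code into indecomposable summands (up to coordinates that are identically zero). Each $C_t\cong C_H$ splits into indecomposables, so the indecomposable components of $\C(G,S,B)$ are those of the copies of $C_H$. Any $\sigma\in K$ permutes the coordinate supports of indecomposable components, and zero coordinates. Given two edges $e,e'$ of $\Cay(H,S)$, take $\sigma\in K$ mapping $e$ to $e'$. Then $\sigma$ maps the indecomposable component $D$ containing $e$ to some component $D'$ containing $e'$. Composing with a map $\rho_t^{-1}$, and if needed an isomorphism of components within a copy of $C_H$, we get an automorphism of $C_H$ that acts as $\sigma$ on $D$ and as the identity elsewhere. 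This automorphism sends $e$ to $e'$, provided $D'$ lies inside the base copy. If $D'$ lies in a different copy, we first transport back using $\rho_t^{-1}$. Zero coordinates are handled separately: they form a single $K$-invariant set, which is either empty or all of $E$. In the latter case all codes are zero and both are trivially symmetric. The key lemma to verify is that the indecomposable decomposition is unique, which I would prove via the equivalence relation "$i\sim j$ iff some minimal-support codeword contains both".
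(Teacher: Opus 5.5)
Your proof is correct. For part (a) and for the rate and minimum distance it is the paper's argument (Theorem~\ref{thm:disccaycode}(b)--(d)): split $E$ along the right cosets of $H$, identify each summand with $\C(H,S,B)$ via right multiplication, and read off dimension and minimum weight from the direct sum.

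The real difference is in the symmetry equivalence. The paper settles it in one line (``$G$ acts transitively on the direct summands, hence \dots''). That gives the implication from $\C(H,S,B)$ to $\C(G,S,B)$ exactly as you do. For the converse, however, transitivity of $G$ on the summands is not enough by itself, and the paper offers nothing more. An automorphism of $\C(G,S,B)$ need not preserve the blocks $\Ecirc t$: already for $B=0$ one has $\Aut(\C(G,S,B))=\Sym(E)$. More generally, code automorphisms can exchange support-components of $\C(H,S,B)$ that lie in different blocks.

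Your route does prove the converse. You recover the finest support decomposition intrinsically from minimal-support codewords (the connected components of the associated matroid). You then observe that the components of $\C(G,S,B)$ are exactly those of the copies $C_t$, and that code automorphisms permute them. The price is the extra lemma, for which you also need that minimal-support codewords span the code. What your argument buys is more general: any code is symmetric if and only if a direct sum of copies of it on disjoint coordinate sets is. So the Cayley structure is used only to obtain the decomposition.

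Two small repairs are needed.
\begin{enumerate}
\item[(1)] Since $e,e'$ both lie in the base block $\Ecirc$, so does $D'$, and no transport by $\rho_t^{-1}$ is needed.
\item[(2)] The map ``$\sigma$ on $D$, identity elsewhere'' is not a permutation when $D'\neq D$. Use instead $\sigma|_D$ on $D$, $(\sigma|_D)^{-1}$ on $D'$, and the identity elsewhere. This preserves $\C(H,S,B)=\bigoplus_j\bigl(\C(H,S,B)\cap\F^{D_j}\bigr)$, because $\sigma$ maps $\C(G,S,B)\cap\F^{D}$ onto $\C(G,S,B)\cap\F^{D'}$.
\end{enumerate}
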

This result allows us to  restrict to  connected Cayley graphs if we wish. It follows immediately from a more technical version Theorem~\ref{thm:disccaycode} which is proved in Section~\ref{ss:disc}.

We next show that each Cayley code determines a family of symmetric Cayley codes. For $G, S, B, A(G,S,B)$ as in Notation~\ref{not:def}, 
the group $A(G,S,B)$  leaves invariant a unique finest partition  $S=\cup_{i\in\mathbf{I}} S_i$ such that each $S_i$ is inverse-closed and $A(G,S,B)$-invariant, and we have $B\leq \F^S=\oplus_{i\in\mathbf{I}}\F^{S_i}$. For each $i$, let $B_i$ be the restriction  of $B$ to $\F^{S_i}$. We show that each of the corresponding Cayley codes $\C(G,S_i,B_i)$ is symmetric.

\begin{theorem}\label{t:SiBi}
	With the notation above, for each $i\in\mathbf{I}$, the Cayley graph $\Cay(G,S_i)$ is normal edge-transitive and the Cayley code $\C(G,S_i,B_i)$ is symmetric. Further, if $B=\oplus_{i\in\mathbf{I}}B_i$, then 
	$\C(G,S,B)= \oplus_{i\in\mathbf{I}} \C(G,S_i,B_i)$.
\end{theorem}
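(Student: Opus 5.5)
The plan is to show that $A:=A(G,S,B)$ embeds into each $A(G,S_i,B_i)$. Then the edge-transitivity criterion of Proposition~\ref{p:cayc-props} (as summarised in Notation~\ref{not:def}) gives both normal edge-transitivity and symmetry. The direct sum statement is handled separately by an edge-partition argument.

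\emph{Structure of the partition.} First I will identify the parts $S_i$ concretely. Each $\sigma\in A$ is a group automorphism, so it commutes with inversion. Hence if $T$ is an $A$-orbit on $S$, then $T^{-1}$ is also an $A$-orbit. It follows that the sets $T\cup T^{-1}$, for $T$ ranging over the $A$-orbits in $S$, are inverse-closed and $A$-invariant, and they partition $S$. Any inverse-closed $A$-invariant subset of $S$ is a union of such sets. Therefore the unique finest partition is $\{S_i\}_{i\in\mathbf{I}}$ with each $S_i=T\cup T^{-1}$ for a single orbit $T$. In particular, $A$ is transitive on $S_i$ modulo inversion: for $s,t\in S_i$ there is $\sigma\in A$ with $s^\sigma\in\{t,t^{-1}\}$.

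\emph{Embedding of $A$.} Next I will show $A\le A(G,S_i,B_i)$ for each $i$. Let $\sigma\in A$. Then $S_i^\sigma=S_i$. Restriction $\F^S\to\F^{S_i}$ commutes with the action of $\sigma$, because $\sigma$ preserves $S_i$. Since $\sigma^S$ preserves $B$, it follows that $\sigma^{S_i}$ preserves $B_i=B|_{S_i}$. Hence $G\rtimes A$ acts on $\Cay(G,S_i)$ and preserves $\C(G,S_i,B_i)$.

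\emph{Edge-transitivity.} It remains to check that $G\rtimes A$ is transitive on the edge set $E^{(i)}$ of $\Cay(G,S_i)$. Right translation by $g^{-1}$ maps $\{g,sg\}$ to $\{1,s\}$. Right translation by $s^{-1}$ maps $\{1,s\}$ to $\{s^{-1},1\}$, so the edges $\{1,s\}$ and $\{1,s^{-1}\}$ lie in the same orbit. An automorphism $\sigma\in A$ maps $\{1,s\}$ to $\{1,s^\sigma\}$. By the previous two steps, every edge therefore lies in the orbit of $\{1,s\}$ for any fixed $s\in S_i$. Proposition~\ref{p:cayc-props} then yields that $\Cay(G,S_i)$ is normal edge-transitive and $\C(G,S_i,B_i)$ is symmetric.

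\emph{Direct sum decomposition.} Label the edge $\{g,sg\}$ by the pair $\{s,s^{-1}\}$. Seen from the endpoint $sg$, this edge is $\{sg,s^{-1}(sg)\}$, and each $S_i$ is inverse-closed, so the index $i$ with $s\in S_i$ is well defined. Hence $E=\bigsqcup_i E^{(i)}$ and $\F^E=\oplus_i\F^{E^{(i)}}$. Moreover, for every $g$ and every $f\in\F^E$, $\phi_g\circ f$ restricted to $S_i$ equals the corresponding map for $\Cay(G,S_i)$ applied to $f|_{E^{(i)}}$.

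Now assume $B=\oplus_i B_i$. Then $\phi_g\circ f\in B$ if and only if $(\phi_g\circ f)|_{S_i}\in B_i$ for every $i$. Here we use that under this hypothesis the restriction of $B$ to $\F^{S_i}$ coincides with $B\cap\F^{S_i}$. Quantifying over $g$, this says $f\in\C(G,S,B)$ if and only if $f|_{E^{(i)}}\in\C(G,S_i,B_i)$ for all $i$, which is the claimed equality.

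I expect the main obstacle to be bookkeeping rather than conceptual. One point is the well-definedness of edge labels, which relies on inverse-closure. The other is making sure the action of $\sigma$ on edges and on $\F^{S_i}$ is compatible with the maps $\phi_g$, namely $\phi_{g^\sigma}(s^\sigma)=\phi_g(s)^\sigma$. That compatibility is exactly what Proposition~\ref{p:cayc-props} packages, so I would invoke it rather than recheck it.
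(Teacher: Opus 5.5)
Your proposal is correct and follows essentially the same route as the paper. You use the edge-transitivity of $G\rtimes A(G,S,B)$ on each $\Cay(G,S_i)$ (Lemma~\ref{l:L}) together with Proposition~\ref{p:cayc-props}(d) for symmetry, and the restriction argument of Proposition~\ref{prop:SiBi} for the direct sum. You also make explicit the inclusion $A(G,S,B)\le A(G,S_i,B_i)$, i.e.\ that each such $\sigma$ preserves $B_i=B|_{S_i}$; the paper uses this only implicitly when it applies Proposition~\ref{p:cayc-props}(d) to $\C(G,S_i,B_i)$.
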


This result is proved in Section~\ref{s:decomp}. Finally we look at various examples of Cayley codes. In Section~\ref{s:ex} we consider  Cayley codes  for several standard choices of the input code $B$, and for the family of Cayley graphs which are cycles. In the final Section~\ref{s:prod}  we show how the Cayley code construction behaves with respect to forming direct products and Cartesian products of Cayley graphs (see Definition~\ref{def:prod}). The Cartesian product of Cayley graphs $\Cay(G,S)$ and $\Cay(H,T)$ is $\Cay(G \times H,S \dot\cup T)$ (where we identify $S, T$ with the subsets  $S\times 1$, $1\times T$ of $G\times H$), and the Cayley code for the Cartesian product is described in Theorem~\ref{t:cprod} in terms of the Cayley codes for the Cartesian factors. The length, rate and distance of this Cayley code are described in terms of the parameters for the Cartesian factors in Remark~\ref{r:cprod}. The direct product of $\Cay(G,S)$ and $\Cay(H,T)$ is $\Cay(G \times H, S \times T)$, and our result for Cayley codes for the corresponding Cayley code is not so precise: we show (Theorem~\ref{t:cprod}) that a subcode projects onto a tensor product of the Cayley codes for the factors. Theorem~\ref{t:cprod} follows from Propositions~\ref{p:cartesianprod} and~\ref{p:dprod}.

\begin{theorem}\label{t:cprod}
	Let $G, H$ be finite groups  with inverse-closed generating sets $S, T$, respectively, let $\F$ be a finite field, and let $A\leq \F^S, B\leq \F^T$ be linear codes. Then 
	\begin{center}
		$\C(G \times H,S \dot\cup T, A \oplus B) = (\C(G,S,A) \otimes \mathbb{F}^{|H|}) \oplus (\mathbb{F}^{|G|} \otimes \C(H,T,B))$,
	\end{center}
	and a certain sub-code of $\C(G \times H,S \times T, A \otimes B)$ projects onto 
	\[ 
	\C(G,S,A) \otimes \C(H,T,B).
	\]
\end{theorem}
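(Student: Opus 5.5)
The plan is to prove the two assertions separately, as the paper indicates (Propositions~\ref{p:cartesianprod} and~\ref{p:dprod}), working throughout with the explicit description of edge-sets.

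\textbf{Cartesian product.} Write $\Gamma=\Cay(G\times H,S\dot\cup T)$. Every edge of $\Gamma$ has the form $\{(g,h),(sg,h)\}$ with $s\in S$ (an ``$S$-edge'') or $\{(g,h),(g,th)\}$ with $t\in T$ (a ``$T$-edge''). So $E\Gamma=E_S\dot\cup E_T$, where $E_S$ is in natural bijection with $E(\Cay(G,S))\times H$ and $E_T$ with $G\times E(\Cay(H,T))$. This gives $\F^{E\Gamma}=\F^{E_S}\oplus\F^{E_T}\cong(\F^{E(\Cay(G,S))}\otimes\F^{|H|})\oplus(\F^{|G|}\otimes\F^{E(\Cay(H,T))})$.

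For a vertex $(g,h)$ and $f\in\F^{E\Gamma}$, the local view $\phi_{(g,h)}\circ f\in\F^{S\dot\cup T}=\F^S\oplus\F^T$ splits into two parts:
\begin{itemize}
\item its $S$-part, which is the local view at $g$ of the restriction $f_h$ of $f$ to the $H$-layer $E(\Cay(G,S))\times\{h\}$;
\item its $T$-part, which is the local view at $h$ of the restriction $f^g$ of $f$ to the layer $\{g\}\times E(\Cay(H,T))$.
\end{itemize}
Since $A\oplus B$ is a direct sum, the condition $\phi_{(g,h)}\circ f\in A\oplus B$ is equivalent to having the $S$-part in $A$ and the $T$-part in $B$. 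Hence $f\in\C(G\times H,S\dot\cup T,A\oplus B)$ if and only if $f_h\in\C(G,S,A)$ for every $h$ and $f^g\in\C(H,T,B)$ for every $g$. This is precisely membership in $(\C(G,S,A)\otimes\F^{|H|})\oplus(\F^{|G|}\otimes\C(H,T,B))$ under the identification above, since tensoring with $\F^{|H|}$ means imposing the constraint independently on each layer. The only care needed is to check that $\phi_{(g,h)}$ restricted to $S$ really agrees with $\phi_g$ of the factor graph. This holds because $s(g,h)=(sg,h)$.

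\textbf{Direct product.} Here edges are $\{(g,h),(sg,th)\}$. I would define a map $\Psi:E(\Cay(G,S))\times E(\Cay(H,T))\to E(\Cay(G\times H,S\times T))$. For a pair of edges $(\{g,sg\},\{h,th\})$ it has two candidate images, $\{(g,h),(sg,th)\}$ and $\{(g,th),(sg,h)\}$. To avoid this ambiguity I would work with arcs (ordered pairs), identifying $\F^{E}$-functions with functions on arcs that are symmetric under reversal, and then define the tensor $f_1\otimes f_2$ on arcs of the product by $((g,h),(sg,th))\mapsto f_1(g,sg)f_2(h,th)$.

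The local view of $f_1\otimes f_2$ at $(g,h)$ is then $(s,t)\mapsto(\phi_gf_1)(s)(\phi_hf_2)(t)$, that is, $\phi_gf_1\otimes\phi_hf_2\in A\otimes B$. So the span $D$ of such product functions lies in $\C(G\times H,S\times T,A\otimes B)$. The sub-code in the statement will be taken to be $D$, or its preimage under the natural projection obtained by restricting to arcs. The map $f_1\otimes f_2\mapsto f_1\otimes f_2$ is then surjective onto $\C(G,S,A)\otimes\C(H,T,B)$ by construction, and linear independence is inherited from the tensor structure of $\F^{\mathrm{arcs}_G}\otimes\F^{\mathrm{arcs}_H}$.

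\textbf{Main obstacle.} The main obstacle is this edge-versus-arc bookkeeping. Two edges of the factors determine two edges of the direct product, and the symmetrised tensor must remain well defined and land in the code. I expect to resolve it by checking that reversal-symmetry of $f_1$ and $f_2$ implies that $f_1\otimes f_2$ is reversal-symmetric, so that it descends to a function on $E$. I would then state the projection precisely, as the map from the sub-code to $\F^{\mathrm{arcs}}$ restricted to a transversal. This is why the statement only asserts a projection rather than an equality.
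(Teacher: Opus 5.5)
Your proposal is correct and follows the paper's route: the Cartesian part does in one step what the paper gets from Proposition~\ref{prop:SiBi} together with Theorem~\ref{thm:disccaycode} (applied to $\Cay(G\times H,S)$, whose components are your layers $G\times\{h\}$), and your reversal-symmetric tensor on arcs is exactly the paper's pullback $\psi\circ(f^\Gamma\otimes f^\Sigma)$ along the well-defined $2$-to-$1$ map $\psi\colon E\Delta\to E\Gamma\times E\Sigma$ of Lemma~\ref{l:e}, so the edge-versus-arc ambiguity you anticipate disappears once you map in that direction. Restricting to a transversal of the fibres of $\psi$ is a sound way to make the projection precise in every characteristic (the map induced by $\psi$ on basis vectors sends $\psi\circ F$ to $2F$, which is zero in characteristic $2$), but note that the theorem passes to a sub-code mainly because $\C(G\times H,S\times T,A\otimes B)$ can be strictly larger than this pullback, e.g.\ it is all of $\F^{E\Delta}$, of dimension $2|E\Gamma|\,|E\Sigma|$, when $A=\F^S$ and $B=\F^T$.
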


The paper is structured as follows.
Section~\ref{sec:CC} contains basic preliminary concepts for graphs and codes with the main focus on Cayley graphs.
In Section~\ref{s:cayc} we give a careful discussion of the Cayley code construction, and in the final  
Sections~\ref{ss:disc}--\ref{s:prod} we prove the main results as noted above.

\section{Spaces, graphs and codes}\label{sec:CC}

In this section we introduce the notation and basic properties of the codes and graphs we will work with, especially Cayley graphs.

\subsection{Vector spaces}\label{s:vsp}

As mentioned in the introduction, a linear code over a finite field $\F$ is a subspace of a finite vector space $\F^X$ of dimension $n=|X|$. To avoid confusion later, it is  helpful to view $\F^X$ as the set of all functions $f:X\to \F$ with pointwise addition and multiplication, that is,  
\[
af+a'f':x\to a(xf) + a'(xf')\quad \text{for $f,f'\in \F^X, a,a'\in \F$.}
\]
For a subset $Y\subseteq X$ and $f\in\F^X$, we denote by $f|_Y$ the restriction of $f$ to $Y$, so $f|_Y\in\F^Y$. Each expression of $X$ as a disjoint union $X=\bigcup_{k\in K}X_k$, for some index set $K$, corresponds to a direct sum decomposition $\F^X = \bigoplus_{k\in K} \F^{X_k}$ such that $f\in\F^X$ corresponds to the tuple $f=(f_k)_{k\in K}$ with $f_k = f|_{X_k}$ for $k\in K$.  If  $\rho\in\Sym(X)$ preserves the decomposition $X=\bigcup_{k\in K}X_k$, that is to say, if $\rho$ permutes the $X_k$ among themselves, then $\rho$ induces a permutation of $K$ and we may write $(X_k)^\rho = X_{k\rho}$ for each $k\in K$. Then for $f=(f_k)_{k\in K}$,  the restriction $(\rho\circ f )|_{X_k}$ first maps $X_k\to X_{k\rho}$ under $\rho$ and then $f$ induces the restriction $f|_{X_{k\rho}}$ on $X_{k\rho}$. Thus 
\begin{equation}\label{e:action}
	(\rho\circ f)_k = (\rho\circ f )|_{X_k} = \rho\circ (f|_{X_{k\rho}}) = \rho\circ f_{k\rho}, \quad \mbox{so $\rho\circ f=(\rho\circ f_{k\rho})_{k\in K}$.}
\end{equation}
We summarise this discussion in the next lemma.

\begin{lemma}\label{lem:Vdecomp}
	For a field $\F$, a set $X$, and a disjoint union $X=\bigcup_{k\in K}X_k$, for some index set $K$, the following hold. 
	\begin{enumerate}
		\item[(a)] $\F^X = \bigoplus_{k\in K} \F^{X_k}$ such that each $f=(f_k)_{k\in K}\in\F^X$ with $f_k = f|_{X_k}$, the restriction to $X_k$, for $k\in K$;
		\item[(b)] if $\rho\in\Sym(X)$ preserves the decomposition $X=\bigcup_{k\in K}X_k$, then for $f=(f_k)_{k\in K}\in\F^X$ we have $\rho\circ f=(\rho\circ f_{k\rho})_{k\in K}$. 
	\end{enumerate}
\end{lemma}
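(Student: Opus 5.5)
The proof is essentially bookkeeping, so I will argue directly from the definitions, part by part.

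For part (a), the first assertion is the standard identification of $\F^X$ with $\prod_{k\in K}\F^{X_k}$. I will define the map $\Theta:\F^X\to\bigoplus_{k\in K}\F^{X_k}$ by $f\mapsto (f|_{X_k})_{k\in K}$. Linearity holds because restriction commutes with pointwise addition and scalar multiplication. Injectivity holds because the $X_k$ cover $X$: if every restriction vanishes, then $f=0$. Surjectivity holds because the $X_k$ are pairwise disjoint: given any tuple $(f_k)$, the function defined by $xf:=xf_k$ for the unique $k$ with $x\in X_k$ is well defined and restricts to $f_k$ on each $X_k$. Since $X$ is finite, $K$ is finite and the direct sum equals the direct product, so no finiteness subtlety arises.

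For part (b), suppose $\rho\in\Sym(X)$ preserves the partition. Then $X_k^\rho$ is again a block, which gives the well-defined induced permutation $k\mapsto k\rho$ of $K$ with $X_k^\rho=X_{k\rho}$. Here $\rho\circ f$ means first apply $\rho$, then $f$, following the paper's right-action convention. Take $x\in X_k$. Then $x\rho\in X_{k\rho}$, so
\[
x(\rho\circ f)=(x\rho)f=(x\rho)f|_{X_{k\rho}}=x\bigl(\rho|_{X_k}\circ f_{k\rho}\bigr).
\]
Hence $(\rho\circ f)_k=\rho\circ f_{k\rho}$, where $\rho$ is read as its restriction $X_k\to X_{k\rho}$. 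This is exactly equation~(\ref{e:action}).

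The only delicate point is this composition and indexing convention, specifically whether one gets $f_{k\rho}$ or $f_{k\rho^{-1}}$. Checking it pointwise as above settles it. Nothing else is an obstacle.
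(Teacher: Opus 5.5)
Your proposal is correct and follows the same route as the paper, whose proof is the discussion just before the lemma: $\F^X$ is identified with the tuple of restrictions $(f|_{X_k})_{k\in K}$, and (b) is checked by noting that $\rho$ carries $X_k$ onto $X_{k\rho}$, so $(\rho\circ f)|_{X_k}=\rho\circ f_{k\rho}$. Your explicit injectivity and surjectivity check, and your remark that finiteness of $X$ (implicit in the paper's setting, though the statement only says ``a set'') is what makes the direct sum agree with the direct product, add useful care without changing the argument.
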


\subsection{Linear codes and their parameters}\label{s:linc}

As introduced in Notation~\ref{not:def}, a \emph{linear $[n,k,d]$-code} over $\F$ is a $k$-subspace $C$ of $\F^X$, where $|X|=n$, and we summarise in Table~\ref{tab1} some relevant parameters for such codes. Those permutations $\rho\in\Sym(X)$ which leave $C$ invariant (setwise) in their induced action on $\F^X$ are called \emph{automorphisms} of $C$ and form the automorphism group $\Aut(C)$ of the code $C$.

\begin{table}[ht]
	\caption{Some parameters for linear $[n,k,d]$-codes $C$ in $\F^X$, where $|X|=n$}\label{tab1}
		\begin{tabular}{lll}
			\hline
			Parameter name & Notation & Comments\\
			\hline
			Length                   &$n$&$n=|X|$\\
			Rank            &$k$ & $k=\dim(C)$ elements of $C$ are  \\
			&& called \emph{codewords} \\
			Rate &$r$            & $r = r(C)=k/n$\\
			Distance&$d$ & minimum \emph{weight} (number of non-  \\
			&&zero entries) of a nonzero codeword \\
			Relative distance &$\delta$ & $\delta = \delta(C)=d/n$ sometimes called the  \\
			&& normalised distance \\
			Automorphism group & $\Aut(C)$ & subgroup of $\Sym(X)$ leaving $C$ \\
			&&invariant; $C$ is \emph{symmetric} if $\Aut(C)$ \\
			&& is transitive on $X$\\
			\hline
		\end{tabular}
	\end{table}
	
	\subsection{Cayley graphs}\label{s:caygraphs}
	
	Recall from Notation~\ref{not:def} the notion of a Cayley graph $\Gamma = \Cay(G,S)$ for 
	a finite group $G$ and non-empty inverse-closed subset $S\subseteq G\setminus\{1\}$, that is, $S^{-1}=\{ s^{-1}\mid s\in S\} = S$. Each $g\in G$ is incident precisely with the edges in the set $E_g$ defined in \eqref{e:eg}, so $g$ is  incident with $|E_g|=|S|$ edges, and hence $\Gamma$ is \emph{regular} of valency $|S|$.
	The property $S^{-1}=S$ ensures that $\Gamma$ is an \emph{undirected graph} (that is, $g$ is adjacent to $g'$ if and only if $g'$ is adjacent to $g$); and $1\not\in S$ ensures that $\Gamma$ has no loops.

	It follows from the definition of  $E\Gamma$ that the connected component of $\Gamma$ containing the identity element $1$ has vertex set $H=\la S\ra$ and edge set
	\begin{equation}\label{e:EH}
		\Ecirc:=\{\{h,sh\}\mid h\in H, s\in S\} = \cup_{h\in  H} E_1 h,\ \mbox{with $E_1$ as in \eqref{e:eg}}
	\end{equation} 
	and hence is equal to  $\Cay(H,S)=(H,\Ecirc )$.
	For an arbitrary element $t\in G$, the connected component of $\Gamma$ containing the $t$ has vertex set equal to the coset $Ht=\la S\ra t$, and edge set 
	\begin{equation}\label{e:Et}
		\{\{ht, sht\}\mid h\in H, s\in S\} = \Ecirc t
	\end{equation}
	and so is the graph $(Ht, \Ecirc t)$. The  action of $t$ by right multiplication on $G$ and on pairs($t:\{h,sh\}\to \{ht, sht\}$) induces an isomorphism from $\Cay(H,S)=(H,\Ecirc )$ to  $(Ht, \Ecirc  t)$. We summarise this discussion in the following statement.
	
	\begin{lemma}\label{lem:cay1}
		Let $G$ be a finite group and $S$ a non-empty inverse-closed subset of $G\setminus\{1\}$. Let $H:=\la S\ra$ have index $u$ in $G$, and let $\T=\{t_1,\dots,t_u\}$ be a set of coset representatives for $H$ in $G$ with $t_1=1$. Then 
		\begin{enumerate}
			\item[(a)] the Cayley graph $\Cay(G,S)$ has exactly $u$ connected components, namely $(Ht_i, \Ecirc  t_i)$ (the connected component containing $t_i$) for $i=1,\dots, u$. 
			\item[(b)] Moreover $(H, \Ecirc )=\Cay(H,S)$, and for each $i$, the map 
			\[
			\text{$t_i:h\to ht_i$, and $\{h,sh\}\to \{ht_i,sht_i\}$ (for $h\in H, s\in S$)}
			\]
			defines a graph  isomorphism from $\Cay(H,S)$ to $(Ht_i, \Ecirc  t_i)$. 
			\item[(c)] $\Cay(G,S)$ is connected if and only if $S$ generates $G$.
		\end{enumerate} 
	\end{lemma}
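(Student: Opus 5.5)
The plan is to verify the three parts in order, all directly from the definition of $E\Gamma$ and the coset decomposition $G=\bigcup_{i=1}^u Ht_i$.

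For part (b), note that $H=\la S\ra$ is a group and $S\subseteq H$ is inverse-closed with $1\notin S$, so $\Cay(H,S)$ is defined. Its edge set is $\{\{h,sh\}\mid h\in H,s\in S\}=\Ecirc$ by \eqref{e:EH}. Right multiplication $\rho_{t_i}:g\mapsto gt_i$ is a bijection $H\to Ht_i$. It maps the pair $\{h,sh\}$ to $\{ht_i,sht_i\}$, which is exactly the general element of $\Ecirc t_i$ by \eqref{e:Et}. Hence $\rho_{t_i}$ is a bijection $\Ecirc\to\Ecirc t_i$ preserving incidence, so it is a graph isomorphism.

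For part (a), the first step is to check that every edge lies inside a single right coset of $H$: an edge $\{g,sg\}$ with $s\in S\subseteq H$ has $sg\in Hg$. Writing $g=ht_i$, we get $\{g,sg\}=\{ht_i,sht_i\}\in \Ecirc t_i$. So $E$ is the disjoint union of the sets $\Ecirc t_i$, and there are no edges between distinct cosets. Consequently each connected component lies inside one coset $Ht_i$.

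It remains to show that each $(Ht_i,\Ecirc t_i)$ is connected. By (b) it suffices to show that $\Cay(H,S)$ is connected. Every $h\in H$ can be written as $h=s_k\cdots s_1$ with $s_j\in S$; no inverses are needed, since $S=S^{-1}$ and $H$ is finite. Then the walk
\[
1,\; s_1,\; s_2s_1,\;\dots,\; s_k\cdots s_1=h
\]
uses the edges $\{x,s_{j+1}x\}$, so it is a walk in $\Cay(H,S)$. Hence there are exactly $u=|G:H|$ components, one for each coset.

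Part (c) is then immediate: the graph is connected if and only if $u=1$, that is, if and only if $\la S\ra=G$.

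No step is a genuine obstacle. The only point needing care is the left/right convention: edges are $\{g,sg\}$, with left multiplication by $S$, so the components are the right cosets $Ht$, and right multiplication gives the isomorphisms.
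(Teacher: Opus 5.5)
Your proof is correct and takes essentially the paper's route. The paper derives the lemma from the discussion just before it: edges $\{g,sg\}$ stay inside right cosets $Ht$, the component of $1$ is $\Cay(H,S)=(H,\Ecirc)$, and right multiplication by $t$ carries it onto $(Ht,\Ecirc t)$. You additionally make explicit the connectivity of $\Cay(H,S)$ via the walk $1, s_1, s_2s_1,\dots$, which the paper simply says follows from the definition.
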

	
	Theorem~\ref{thm:disccaycode}, proved in Section~\ref{ss:disc}, shows how Cayley codes for disconnected graphs are related to the Cayley codes corresponding to the connected components. As a result we often assume that $\la S\ra  = G$ and $\Gamma$ is connected.
	
	\subsection{Automorphisms of Cayley graphs} \label{s:aut}
	
	Each Cayley graph $\Gamma = \Cay(G,S)$ admits as a subgroup of automorphisms the group 
	\begin{equation} \label{e:Aut}
		\mbox{$A(\Gamma) := G\rtimes A(G,S)$, where $A(G,S)=
			\{ \sigma\in\Aut(G)\mid S^\sigma=S\}$}, 
	\end{equation}
	where for all $g,x\in G$ and $\sigma\in A(G,S)$, 
	$
	x:g\to gx\quad \text{and}\quad \sigma:g\to g^\sigma$.
	It is straightforward to check that each of these maps preserves $E\Gamma$, and hence defines a graph automorphism, so $A(\Gamma)\leq \Aut(\Gamma)$. In particular, the group $G$ acts by right multiplication as a \emph{regular permutation group} on $V\Gamma$ (transitive with trivial vertex-stabilisers), so $\Gamma$ is always vertex-transitive (with apologies for the dual use of the term `regular' from graph theory and permutation group theory).

	The subgroup $A(\Gamma)$ is the normaliser of $G$ in $\Aut(\Gamma)$; it is the largest subgroup of $\Aut(\Gamma)$ which preserves the structure of $\Gamma$ as a Cayley graph of $G$. Although $A(\Gamma)$ is vertex-transitive, it is not always edge-transitive on $\Gamma$. In fact  $A(\Gamma)$ is transitive on $E\Gamma$ if and only if either $A(G,S)$ is transitive on $S$, or $S$ is the disjoint union $S_0\, \dot\cup\, S_0^{-1}$ such that $S_0$ (and hence also $S_0^{-1}$) is an $A(G,S)$-orbit (see \cite[Proposition 1(b)]{P99}). Cayley graphs with this property were called \emph{normal edge-transitive} in \cite{P99}. Thus $\Gamma$ is normal edge-transitive if and only if $A(\Gamma)$ is edge-transitive. Note that, if $S$ generates $G$, then $A(G,S)$ acts faithfully on $S$, and so can be identified with a subgroup of $\Sym(S)$.
	
	In our application to Cayley codes we will need to work with some subgroup of $A(G,S)$ and we now set up the general context for this. We use the following notation: for each $g\in G$ and subgroup $L\leq A(G,S)$ we denote by $g^L=\{ g^\sigma\mid \sigma\in L\}$ the $L$-orbit in $G$ containing $g$.
	
	\begin{notation}\label{not:L}
		{\rm 
			Let $G$ be a finite group, $S$ an inverse-closed subset of $G\setminus\{1\}$, and let $1\leq L\leq A(G,S)$ with $A(G,S)$ as in \eqref{e:Aut}. Let  $S=\cup_{i\in\mathbf{I}}S_i$ be the partition of $S$ such that, for each $i$, and $s\in S_i$, we have $S_i= s^L\cup(s^{-1})^L$. That is to say, the partition $\cup_{i\in\mathbf{I}}S_i$ is the (unique) finest partition of $S$ such that each part $S_i$ is both $L$-invariant and inverse-closed.
		}
	\end{notation}
	
	For example, if the group $L=1$ in Notation~\ref{not:L}, then each  $S_i$ has the form $\{s,s^{-1}\}$ with size $1$ or $2$, and each connected component of the Cayley subgraph $\Cay(G, \{s,s^{-1}\})$ is a cycle of length $|s|$. Similarly, as noted above, if $L$ is such that $G\rtimes L$ is edge-transitive on $\Cay(G,S)$, then the partition consists of a single part and in particular $\Cay(G,S)$ is normal edge-transitive.

	The partition of $S$ in Notation~\ref{not:L} corresponds to an edge-disjoint decomposition of the Cayley graph $\Cay(G,S)$ into  normal edge-transitive Cayley graphs for $G$.

	\begin{lemma}\label{l:L}
		Let $G, S, L$ and the partition $S=\cup_{i\in\mathbf{I}}S_i$ be as in Notation~$\ref{not:L}$, and let $\Gamma=\Cay(G,S) = (G, E)$. Then the following hold.
		\begin{enumerate}
			\item[(a)] $\Gamma$ is an edge-disjoint union of Cayley subgraphs $\Gamma_i=\Cay(G,S_i)$, for $i\in\mathbf{I}$, and the group $G\rtimes L \leq \cap_{i\in \mathbf{I}}\, \Aut(\Gamma_i)$;
			
			\item[(b)] for each $i\in\mathbf{I}$, the edge-set $E\Gamma_i=\{ \{g,sg\} \mid g\in G, s\in S_i\}$ and is a  $(G\rtimes L)$-orbit, so $\Gamma_i$ is a normal edge-transitive Cayley graph.
		\end{enumerate}
	\end{lemma}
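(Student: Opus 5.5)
The plan is to prove (a) straight from the definitions, then obtain (b) from the orbit description of $L$ together with the criterion for normal edge-transitivity recalled from \cite[Proposition 1(b)]{P99}.

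For part (a), first note that each $S_i$ is inverse-closed and non-empty, and $S_i\subseteq S\subseteq G\setminus\{1\}$. So $\Gamma_i=\Cay(G,S_i)$ is a well-defined Cayley graph, and its edge-set is $E\Gamma_i=\{\{g,sg\}\mid g\in G, s\in S_i\}$, which is contained in $E$. Since the $S_i$ cover $S$, the union of the $E\Gamma_i$ is $E$.

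Edge-disjointness is the one point needing care. Suppose $\{g,sg\}=\{g',s'g'\}$ with $s\in S_i$ and $s'\in S_j$. Either $g=g'$, in which case $s=s'$. Or $g'=sg$ and $s'g'=g$, in which case $s'=s^{-1}$. In both cases $s'$ lies in $S_i$, because $S_i$ is inverse-closed, and since the $S_i$ are disjoint we get $i=j$.

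For the automorphisms, right multiplication by any $x\in G$ preserves every Cayley graph of $G$. Each $\sigma\in L$ is a group automorphism fixing $S_i$ setwise, since $S_i$ is a union of $L$-orbits. It therefore sends $\{g,sg\}$ to $\{g^\sigma,s^\sigma g^\sigma\}$ with $s^\sigma\in S_i$. Hence $G\rtimes L\leq \Aut(\Gamma_i)$ for every $i$.

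For part (b), fix $s\in S_i$, so that $S_i=s^L\cup(s^{-1})^L$. By part (a), $G\rtimes L$ leaves $E\Gamma_i$ invariant, so it suffices to show that every edge of $\Gamma_i$ lies in the orbit of $\{1,s\}$.

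Take an edge $\{g,tg\}$ with $t\in S_i$. Right multiplication by $g^{-1}$ maps it to $\{1,t\}$. There are two cases.
\begin{itemize}
\item If $t=s^\sigma$ for some $\sigma\in L$, then $\sigma$ maps $\{1,s\}$ to $\{1,t\}$.
\item If $t=(s^{-1})^\sigma$, then $\sigma$ maps $\{1,s^{-1}\}$ to $\{1,t\}$. Moreover $\{1,s^{-1}\}$ is the image of $\{1,s\}$ under right multiplication by $s^{-1}$, since $\{1,s\}s^{-1}=\{s^{-1},1\}$.
\end{itemize}
So $E\Gamma_i$ is a single $(G\rtimes L)$-orbit.

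Finally, $L\leq A(G,S_i)$, so $A(\Gamma_i)=G\rtimes A(G,S_i)$ contains $G\rtimes L$ and is therefore edge-transitive. By the criterion from \cite[Proposition 1(b)]{P99} recalled in Subsection~\ref{s:aut}, $\Gamma_i$ is normal edge-transitive.

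The only mildly delicate point is the inverse step above: handling the unordered edge $\{1,s^{-1}\}$ by means of right multiplication by $s^{-1}$.
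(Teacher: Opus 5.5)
Your proof is correct and follows the same route as the paper. The paper obtains (a) from the definitions and \eqref{e:Aut}, and (b) from the preceding discussion of normal edge-transitivity; you have supplied the details it leaves implicit, namely the inverse-closure argument for edge-disjointness and the explicit orbit computation using right multiplication by $s^{-1}$.
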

	
	\begin{proof}
		(a) The first assertion follows from the definition of a Cayley graph, and the fact that, for each $i\in\mathbf{I}$, 
		$G\rtimes L\leq \Aut(\Gamma_i)$ follows from \eqref{e:Aut} and the discussion above.
		
		(b) Again the form of $E\Gamma_i$ comes from the definition of $\Cay(G, S_i)$, and the discussion above shows that $E\Gamma_i$ is a $(G\rtimes L)$-orbit, so $\Gamma_i$ is normal edge-transitive.
	\end{proof}
	
	\section{Basic properties of Cayley codes}\label{s:cayc}
	
	Recall from Notation~\ref{not:def} that the ingredients for constructing a Cayley code are a Cayley graph $\Gamma=\Cay(G,S)=(G,E)$ and a linear code $B \leq \F^S$, where $S$ is a non-empty inverse-closed subset of $G\setminus\{1\}$. The corresponding Cayley code $\C(G,S,B)$, given by \eqref{e:cc}, is contained in $\F^E$. Recall that $\Aut(B)\leq \Sym(S)$ (in its induced action on $\F^S$), and for $\sigma\in A(G,S)$ (as in \eqref{e:Aut}) we denote by $\sigma^S$ the permutation induced by $\sigma$ on $S$. Recall also the subgroup $A(G,S,B)$ of $A(G,S)$ from   \eqref{e:Autcc}. 
	
	We verify several simple properties of Cayley codes, some of which can be found in \cite{KL2012} or \cite{KW2016}, while one strengthens a result in these papers (see Remark~\ref{rem:strong}).
	
	\begin{proposition}\label{p:cayc-props}
		Let $G, S, B, E, \F, \Gamma$ be as above. Then
		\begin{enumerate}
			\item[(a)]  $\C(G,S,B)$ is a linear code in $\F^E$ of length $|E|=|G|\cdot|S|/2$ and rate at least $2r(B)-1$, where $r(B)$ is the rate of $B$.
			
			\item[(b)] If $0\leq B_1\leq B_2\leq \F^S$, then $\C(G,S,B_1) \leq \C(G,S,B_2)$, and in particular, $\C(G,S,0)=0$, and $\C(G,S,\F^S)=\F^E$. 
			
			\item[(c)] Let $g\in G$ and $f\in\F^E$. Then for $\widehat{g}:\{x,sx\}\to \{xg,sxg\}$ (the induced action of $g$ on $E$), the map 
			\[
			g: f\to \widehat{g}\circ f
			\]
			defines a $G$-action on $\F^E$ such that $\C(G,S,B)$ is $G$-invariant, so $G\leq\Aut(\C(G,S,B))$.
			
			\item[(d)]  
			$\Aut(\C(G,S,B))\cap A(\Gamma) = G\rtimes A(G,S,B)$, with $A(\Gamma)$ as in \eqref{e:Aut} and  $A(G,S,B)$ as in \eqref{e:Autcc}. Further if,  in Notation~$\ref{not:L}$,  $L= A(G,S,B)$ and the partition of $S$ has only one part,  then $\Cay(G,S)$ is normal edge-transitive and $\C(G,S,B)$ is symmetric.
		\end{enumerate}
	\end{proposition}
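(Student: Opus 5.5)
Part (a) is a counting argument. Each edge $\{g,sg\}$ lies in exactly two of the sets $E_g$ (namely $E_g$ and $E_{sg}$), so $|E|=|G|\cdot|S|/2$; linearity is already noted after \eqref{e:cc}. For the rate, I would write $\C(G,S,B)$ as the intersection over $g\in G$ of the subspaces
\[
W_g:=\{f\in\F^E\mid \phi_g\circ f\in B\}.
\]
The restriction $f\mapsto f|_{E_g}$ identifies $\F^{E_g}$ with $\F^S$ via $\phi_g$, so $W_g$ has codimension $|S|-\dim B$ in $\F^E$. Hence
\[
\dim\C(G,S,B)\geq |E|-|G|(|S|-\dim B).
\]
Dividing by $|E|=|G||S|/2$ gives rate at least $1-2(1-r(B))=2r(B)-1$.

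Part (b) is immediate from \eqref{e:cc}, since each condition $\phi_g\circ f\in B_1$ implies $\phi_g\circ f\in B_2$. The extreme cases follow because every edge lies in some $E_g$: if $B=0$ then $f$ vanishes on each $E_g$, and if $B=\F^S$ there is no constraint.

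For (c), I would first check that $\widehat g$ permutes $E$ and that $g\mapsto\widehat g$ is a right action; this is Lemma~\ref{lem:cay1}-style right multiplication. The key identity is $\widehat g(E_x)=E_{xg}$, and more precisely $\widehat g\circ\phi_x=\phi_{xg}$ as maps $S\to E$. So for $f\in\C(G,S,B)$ the restriction of $\widehat g\circ f$ through $\phi_x$ equals $\phi_{xg}\circ f\in B$, and invariance follows.

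For (d), take $\sigma\in A(G,S)$, acting on edges by $\{x,sx\}\mapsto\{x^\sigma,s^\sigma x^\sigma\}$. Then $\sigma\circ\phi_x=\phi_{x^\sigma}\circ\sigma^S$, so pulling a codeword back through $\sigma$ replaces each local word $\phi_{x^\sigma}\circ f$ by its image under $\sigma^S$. If $\sigma^S\in\Aut(B)$, this shows $\sigma$ preserves $\C(G,S,B)$. Combined with (c), this gives $G\rtimes A(G,S,B)\leq \Aut(\C(G,S,B))\cap A(\Gamma)$.

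The main obstacle is the reverse inclusion. Writing an element of the intersection as $x\sigma$ and using (c), I may assume $\sigma\in A(G,S)$ preserves the code, and I must deduce $\sigma^S\in\Aut(B)$. My plan is to show that the projection $\C(G,S,B)\to\F^{E_1}\cong\F^S$ maps onto $B$, so that $B$ is recovered from the code at the vertex $1$, which $\sigma$ fixes; invariance of the code then forces $B^{\sigma^S}=B$. Surjectivity of local projections is not automatic. I expect to need either the paper's convention, or to restrict to the case where local projections are onto, or to replace $B$ by its effective local code. I would check this carefully, possibly handling it through the definition of $A(G,S,B)$.

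The final assertion follows from the paragraph before Notation~\ref{not:L}, where a single part means $G\rtimes L$ is edge-transitive. In that case $\Cay(G,S)$ is normal edge-transitive, and $G\rtimes A(G,S,B)\leq\Aut(\C(G,S,B))$ is transitive on $E$, so the code is symmetric.
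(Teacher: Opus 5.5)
Your proofs of (a)--(c), of the inclusion $G\rtimes A(G,S,B)\le \Aut(\C(G,S,B))\cap A(\Gamma)$, and of the final assertion of (d) are correct and essentially the paper's. The one difference is the rate bound: the paper cites Kaufman--Wigderson, while you prove it directly by counting codimensions.

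\textbf{The reverse inclusion is false.} The step you leave open cannot be closed. The paper's one-line claim that it ``follows from \eqref{e:Autcc} and \eqref{e:cc}'' is wrong. Take the cycle of Subsection~\ref{ss:cycle}: $G=\mathbb{Z}_n$ with $n\ge 3$, $S=\{\pm1\}$ and $B=B_0$.
\begin{itemize}
\item The local condition at $g$ forces $(\{g,g+1\})f=0$, and every edge has this form. So $\mathbf{Cyc}_0=0$, as in Proposition~\ref{lem:cycle1}(a), and $\Aut(\mathbf{Cyc}_0)\cap A(\Gamma)=A(\Gamma)\cong D_{2n}$.
\item But $A(G,S)=\{1,\tau\}$ with $\tau:i\mapsto -i$. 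The permutation $\tau^S$ swaps $\pm1$ and so carries $B_0$ to $B_\infty\ne B_0$.
\item Hence $A(G,S,B_0)=1$, and $G\rtimes A(G,S,B_0)=G$ has index $2$ in the intersection.
\end{itemize}

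This is not an artefact of the zero code. Take $n\ge 6$, $S=\{\pm1,\pm2\}$ and $B=\{b\in\F^S\mid (1)b=(-1)b,\ (2)b=0\}$. The Cayley code consists of the functions that are constant on the edges $\{g,g+1\}$ and zero on the edges $\{g,g+2\}$. This code is $\tau$-invariant. However, $\tau^S$ does not preserve $B$, since it replaces the condition $(2)b=0$ by $(-2)b=0$.

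\textbf{Your proposed repair is the right one.} Put $B':=\{\phi_1\circ f\mid f\in\C(G,S,B)\}\le B$.
\begin{itemize}
\item We have $\phi_g\circ f=\phi_1\circ f^{g}$, and $f^{g}\in\C(G,S,B)$ by (c). So every local projection of the code has image $B'$, and therefore $\C(G,S,B')=\C(G,S,B)$.
\item Suppose $\sigma\in A(G,S)$ preserves the code. Because $1^\sigma=1$, we get $\phi_1\circ f^\sigma=\sigma^S\circ(\phi_1\circ f)$, so $\sigma^S\in\Aut(B')$.
\item Combining this with your forward argument applied to $B'$ gives $\Aut(\C(G,S,B))\cap A(\Gamma)=G\rtimes A(G,S,B')$.
\end{itemize}
This group contains $G\rtimes A(G,S,B)$. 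It equals it whenever the projection of the code at the vertex $1$ is onto $B$, which is the hypothesis you anticipated. The symmetry conclusion of (d), and with it Theorem~\ref{t:SiBi}, use only the inclusion you did prove, so they are unaffected.
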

	
	\begin{remark}\label{rem:strong}{\rm
			We note that the condition in Proposition~\ref{p:cayc-props}(d) on the partition from Notation~\ref{not:L} means that  $G\rtimes A(G,S,B)$ is edge-transitive on $\Cay(G,S)$ by Lemma~\ref{l:L}. Thus Proposition~\ref{p:cayc-props}(d) is a strengthening of \cite[Lemma 5]{KW2016} which assumes that $A(G,S,B)$ is transitive on $S$ (see also \cite[Proposition 5]{KL2012} and \cite[Lemma 5]{KW2010}).
		}
	\end{remark}

	\begin{proof}
		(a) Let $f,f'\in\C(G,S,B)$ and $a,a'\in\F$. Then by \eqref{e:cc}, for each $g\in G$, $\phi_g\circ f, \phi_g\circ f'\in B$, and hence also $\phi_g\circ (af+a' f') = a\phi_g\circ f+a' \phi_g\circ f'\in B$, since $B$ is linear.
		The length of $\C(G,S,B)$ is $|E|$ which equals $|E|=|G|\cdot|S|/2$. The assertion about the rate of $\C(G,S,B)$ is proved in \cite[Lemma 1]{KW2016}.
		
		(b) By \eqref{e:cc}, for each $i=1,2$, the code $\C(G,S,B_i)$ consists of all $f\in\F^E$ such that, for all $g\in G$, $\phi_g\circ f\in B_i$. All assertions now follow.
		
		(c) and (d) Proofs of these parts in the case where $A(G,S,B)$ is transitive on $S$ are given in \cite[Lemma 5]{KW2016}. However it is helpful to see a complete proof using our notation. Let $L=A(G,S,B)$.  First we note that, for each $x\in G\rtimes L$, $f\in \C(G,S,B)$, and $e\in E$,  the image $f^x=x\circ f$ is the map $f^x:e\to (e^x)f$. To verify that $f^x\in \C(G,S,B)$ we  show, for all $g\in G$, that the map $\phi_g\circ f^x$ lies in $B$.
		For each $s\in S$, we have by \eqref{e:phig}, 
		\[
		s(\phi_g\circ f^x) = (\{g,sg\})f^x = (\{g,sg\}^x)f.
		\]
		First, if $x\in G$, then $(\{g,sg\}^x)f= (\{gx,sgx\})f = s(\phi_{gx}\circ f)$, and so $\phi_g\circ f^x = \phi_{gx}\circ f$, and this lies in $B$ since $f\in\C(G,S,B)$. Next suppose that $x\in L\leq A(G,S)$ and note that $S^x=S$ by \eqref{e:Aut}, and  $B^x=B$ since $L\leq\Aut(B)$. Hence $(\{g,sg\}^x)f=  (\{g^x,s^xg^x\})f = s^x(\phi_{g^x}\circ f)$, so $\phi_g\circ f^x=x|_S \circ (\phi_{g^x}\circ f)$. Now $f':=\phi_{g^x}\circ f\in B$ by \eqref{e:cc} since $f\in\C(G,S,B)$, and  the restriction $x':=x|_S$ leaves $S$ invariant since $S^x=S$. Further, by definition, the composition $x|_S \circ (\phi_{g^x}\circ f) = x'\circ f' = (f')^{x'}$, and as $B^x=B$ and $f'\in B$ it follows that $(f')^{x'}\in B$. Thus  $\phi_g\circ f^x= (f')^{x'}\in B$. We have shown therefore that both $G$ and $L$ lie in $\Aut(\C(G,S,B))$, and hence also $G\rtimes L\leq \Aut(\C(G,S,B))$. 
		In particular, part (c) is proved, and so 
		\[
		\Aut(\C(G,S,B))\cap A(\Gamma) = G\rtimes (\Aut(\C(G,S,B)\cap A(G,S))
		\]
		and we have just proved that $\Aut(\C(G,S,B)\cap A(G,S)$ contains $L$. The reverse inclusion follows from \eqref{e:Autcc} and \eqref{e:cc}.
		Finally if the partition in Notation~\ref{not:L} for $L=A(G,S,B)$ has only one part then  $G\rtimes L$ is transitive on $E$, by Lemma~\ref{l:L}(b), and hence by definition, $\Cay(G,S)$ is normal edge-transitive and $\C(G,S,B)$ is symmetric.
	\end{proof}
	
	\section{Disconnected Cayley codes}\label{ss:disc}
	
	Recall the description in Lemma~\ref{lem:cay1} of the connected components of a disconnected Cayley graph $\Gamma=\Cay(G,S)=(G,E)$. The edge set $E$ is a disjoint union of the edges sets of the connected components, and this gives a $G$-invariant partition of $E$, and hence of $\F^E$ (Lemma~\ref{lem:Vdecomp}). We now show in Theorem~\ref{thm:disccaycode} that these decompositions lead to corresponding decompositions of Cayley codes. Theorem~\ref{thm:disconn} follows immediately from this result.
	
	\begin{theorem}\label{thm:disccaycode}
		Let $G$ be a finite group, $S$ an inverse-closed subset of $G\setminus\{1\}$ such that $H:=\la S\ra \ne G$, and let $\mathcal{T}$ be a set of coset representatives for $H$ in $G$ such that $1\in \mathcal{T}$. Let $B\leq \F^S$ be a linear code. Then, with $\Ecirc $ as in \eqref{e:EH} and $\phi_g$ as in \eqref{e:phig} and, for each $t\in \mathcal{T}$, setting
		\begin{equation}\label{e:cHt}
			\C(Ht,S,B):=\{ f\in\F^{\Ecirc  t} \mid\,  \forall\ g\in Ht,\ \phi_g\circ f\in B\},\quad  
		\end{equation}
		
		\begin{enumerate}
			\item[(a)] $\C(H,S,B)$ is the Cayley code corresponding to $\Cay(H,S)$ and $B$;
			
			\item[(b)]  for each $t\in \mathcal{T}$, $\C(Ht,S,B)$ is a linear code and, for the right multiplication action $t^{\Ecirc }:\Ecirc \to \Ecirc  t$, the map  $f\to t^{\Ecirc }\circ f$ defines an isomorphism $\C(Ht,S,B)\to \C(H,S,B)$;
			
			\item[(c)] the Cayley code $\C(G,S,B)=\bigoplus_{t\in \mathcal{T}} \C(Ht,S,B)$, a direct sum of $|\mathcal{T}|$ copies of $\C(H,S,B)$;
			
			\item[(d)] $\C(H,S,B)$ has the same rate and mminimum distance as $\C(G,S,B)$, while its relative distance is $|\mathcal{T}|$ times greater than  that of $\C(G,S,B)$. Also $\C(H,S,B)$ is symmetric if and only if $\C(G,S,B)$ is symmetric.
		\end{enumerate}
	\end{theorem}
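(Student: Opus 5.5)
The plan is to work entirely inside the decomposition of $E$ given by Lemma~\ref{lem:cay1}: $E=\dot\bigcup_{t\in\mathcal{T}}\Ecirc t$, where for $g\in Ht$ the full star $E_g$ is contained in $\Ecirc t$ (since $sg\in Ht$). By Lemma~\ref{lem:Vdecomp}(a) this gives $\F^E=\bigoplus_{t}\F^{\Ecirc t}$ with $f=(f|_{\Ecirc t})_t$.

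Part (a) is immediate. For $t=1$, \eqref{e:cHt} reads $\{f\in\F^{\Ecirc}\mid \forall h\in H,\ \phi_h\circ f\in B\}$, and $\Ecirc$ is exactly the edge set of $\Cay(H,S)$ by \eqref{e:EH}. So this is \eqref{e:cc} for $(H,S,B)$. Linearity of each $\C(Ht,S,B)$ follows exactly as in Proposition~\ref{p:cayc-props}(a).

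For (b), let $\tau:\Ecirc\to\Ecirc t$, $\{h,sh\}\mapsto\{ht,sht\}$, which is a bijection by Lemma~\ref{lem:cay1}(b). Composition $f\mapsto\tau\circ f$ is then a linear bijection $\F^{\Ecirc t}\to\F^{\Ecirc}$. The key identity, as in the proof of Proposition~\ref{p:cayc-props}(c), is
\[
s(\phi_h\circ(\tau\circ f))=(\{ht,sht\})f=s(\phi_{ht}\circ f),
\]
so $\phi_h\circ(\tau\circ f)=\phi_{ht}\circ f$. As $h$ runs over $H$, $ht$ runs over $Ht$, so $f\in\C(Ht,S,B)$ if and only if $\tau\circ f\in\C(H,S,B)$. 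This gives the isomorphism, and it preserves Hamming weight.

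For (c), $f\in\F^E$ lies in $\C(G,S,B)$ if and only if for every $t$ and every $g\in Ht$ we have $\phi_g\circ f\in B$. Since $\phi_g\circ f$ depends only on $f|_{E_g}\subseteq f|_{\Ecirc t}$, this says exactly that each component $f|_{\Ecirc t}$ lies in $\C(Ht,S,B)$, which gives the direct sum. With $u:=|\mathcal{T}|$, part (d) on rate and distance then follows directly. The dimension is $u\dim\C(H,S,B)$ and the length is $u|\Ecirc|$, so the rates agree. The weight of $f$ is the sum of the component weights, so the minimum nonzero weight is attained by a vector with a single nonzero component, giving equal distance $d$. The relative distance is then $d/(u|\Ecirc|)$ versus $d/|\Ecirc|$.

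The symmetry equivalence is the main obstacle, because $\Aut(\C(G,S,B))$ need not respect the component decomposition a priori. I also need to handle the degenerate case where the code is $0$; there every permutation is an automorphism, and $\C(H,S,B)=0$ if and only if $\C(G,S,B)=0$, so symmetry holds on both sides.

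If $\C(H,S,B)$ is symmetric, let $\alpha\in\Aut(\C(H,S,B))$ with $e\alpha=e'$. Define $\hat\alpha\in\Sym(E)$ to act as $\tau^{-1}\alpha\tau$ on each $\Ecirc t$. By (b) and (c), $\hat\alpha$ preserves $\C(G,S,B)$ and acts transitively within each component. Together with $G\le\Aut(\C(G,S,B))$ from Proposition~\ref{p:cayc-props}(c), whose right multiplication permutes the components transitively, this makes $\Aut(\C(G,S,B))$ transitive on $E$.

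Conversely, suppose $\C(G,S,B)$ is symmetric. I would take $\rho\in\Aut(\C(G,S,B))$ mapping $e\in\Ecirc$ to $e'\in\Ecirc$, and try to show some automorphism of $\C(H,S,B)$ does the same. The first idea is to use the support structure: for the nonzero code, consider the subspace of codewords supported on $\Ecirc$, namely the copy of $\C(H,S,B)$, and project $\rho$. A cleaner route is to restrict to the set of coordinates not identically zero on the code; the components are then unions of minimal-support classes. If this fails, I would instead argue via transitivity: since $G$ permutes components transitively, the stabiliser of the component $\Ecirc$ within the group generated by $\rho$ and $G$ should carry $e$ to $e'$, but $\rho$ may mix components. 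This direction needs the most care.
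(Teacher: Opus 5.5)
Your arguments for (a)--(c), for the rate, distance and relative distance in (d), and for the implication from symmetry of $\C(H,S,B)$ to symmetry of $\C(G,S,B)$ are correct. They follow the paper's route; the paper proves (c) by two inclusions and extension by zero, while your observation that $\phi_g\circ f$ only sees $f|_{\Ecirc t}$ gives both inclusions at once.

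\textbf{The gap.} The converse implication, from symmetry of $\C(G,S,B)$ to symmetry of $\C(H,S,B)$, is not proved. You list three strategies but complete none. The obstacle you flag is real: $\Aut(\C(G,S,B))$ need not preserve the partition $\{\Ecirc t\mid t\in\mathcal{T}\}$. For example, with $B=\F^S$ the code is $\F^E$, whose automorphism group is all of $\Sym(E)$. So the stabiliser-of-$\Ecirc$ idea has nothing to rest on. The paper's own one-line justification, that $G$ permutes the summands in (c) transitively, also only gives the direction you already have.

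\textbf{The missing idea} is the canonical finest direct-sum decomposition of a code. For $D\le\F^X$, call a partition $\mathcal{P}$ of $X$ \emph{splitting} if $D=\bigoplus_{P\in\mathcal{P}}(D\cap\F^P)$. Equivalently, for each $f\in D$ and block $P$, the function equal to $f$ on $P$ and $0$ elsewhere lies in $D$.
\begin{enumerate}
\item[(1)] Truncating first to $P$ and then to $Q$ shows that the common refinement of two splitting partitions is splitting. So there is a unique finest splitting partition.
\item[(2)] Every $\rho\in\Aut(D)$ therefore permutes the blocks of this finest partition, and $\rho,\rho^{-1}$ carry $D\cap\F^P$ and $D\cap\F^{P\rho}$ onto each other.
\item[(3)] Take $D=\C(G,S,B)$. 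By (c), $\{\Ecirc t\}$ is splitting, so the finest partition refines it. Its blocks $K\subseteq\Ecirc$ then satisfy $\C(H,S,B)=D\cap\F^{\Ecirc}=\bigoplus_K(D\cap\F^K)$.
\item[(4)] Given $e,e'\in\Ecirc$, choose $\rho\in\Aut(D)$ with $e\rho=e'$, and let $P$ be the block containing $e$. Then $P\rho$ is the block containing $e'$, so $P\cup P\rho\subseteq\Ecirc$.
\item[(5)] Let $\alpha$ be the permutation of $\Ecirc$ that agrees with $\rho$ on $P$, with $\rho^{-1}$ on $P\rho$ when $P\rho\neq P$, and is the identity elsewhere. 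It interchanges the summands $D\cap\F^P$ and $D\cap\F^{P\rho}$ (or preserves $D\cap\F^P$ if $P\rho=P$) and fixes every other summand pointwise.
\end{enumerate}
Hence $\alpha\in\Aut(\C(H,S,B))$ and $e\alpha=e'$, so $\C(H,S,B)$ is symmetric. This argument also covers the zero code, so no separate case is needed.
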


	\begin{proof}
		(a) First we note that part (a) follows from the definition of a Cayley code in \eqref{e:cc}. 

		(b) We define a vector space isomorphism $\F^{\Ecirc t}\to \F^{\Ecirc}$ by $f\to t^{\Ecirc}\circ f$, for each $f\in\F^{\Ecirc t}$. To see that this map restricts to an isomorphism 
		$\C(Ht,S,B)\to \C(H,S,B)$ we consider  $f\in\C(Ht,S,B)\subseteq \F^{\Ecirc t}$. By \eqref{e:phig} and \eqref{e:cHt}, this is equivalent to the condition that,  for all  $ht\in Ht$,  $B$ contains
		\[
		\phi_{ht}\circ f = (\phi_h\circ t^{\Ecirc}) \circ f = \phi_h\circ (t^{\Ecirc} \circ f)
		\]
		and hence $t^{\Ecirc} \circ f\in \C(H,S,B)$ by \eqref{e:cHt}. Then since $t^{\Ecirc} \circ f$ runs over all elements of $\F^{\Ecirc}$ as $f$ runs over all elements of $\F^{\Ecirc t}$ it follows that  the map $f\to t^E\circ f$ takes $\C(Ht,S,B)$ into $\C(H,S,B)$. A similar argument shows that  $t^{E\circ}\circ f\in\C(H,S,B)$ implies  that $f\in\C(Ht,S,B)$, so $\C(Ht,S,B)$ is isomorphic to the Cayley code $\C(H,S,B)$, proving part (b). 
		
		(c) By Lemma~\ref{lem:cay1}, the $u:=|\mathcal{T}|$ connected components of $\Gamma=\Cay(G,S)$ are the graphs $(Ht,\Ecirc t)$, for $t\in \mathcal{T}$, with $\Ecirc, \Ecirc t$ as in \eqref{e:EH} and \eqref{e:Et}, and each $(Ht,\Ecirc t)$ is isomorphic to  the connected component $\Cay(H,S)=(H,\Ecirc)$ containing the identity $1\in \mathcal{T}$, via the maps $h\to ht,$ and $\{h,sh\}\to\{ht,sht\}$ on vertices and edges. 
		The edge set $E$ is therefore 
		the disjoint union $E=\cup_{t\in \mathcal{T}}\Ecirc t$, and this decomposition is in particular $G$-invariant. Thus, by Lemma~\ref{lem:Vdecomp}, the vector space $\F^E$ decomposes as $\F^E=\bigoplus_{t\in \mathcal{T}}\F^{\Ecirc t}$ and, for $f=(f_t)_{t\in \mathcal{T}}\in\F^X$ and $g\in G$, the $t^{th}$-component of $g^E\circ f$ is $f_{t'}$ where $Ht'=Htg$. More precisely, for a fixed $t\in \mathcal{T}$ and any $h\in H$,  the map $\phi_{ht}=\phi_h\circ t^E$ maps $s\to \{ht,sht\}$ (an edge of $\Ecirc t$), for $s\in S$. Moreover, for $f\in\F^E$, we have $\phi_{ht}\circ f = \phi_{ht}\circ (f|_{\Ecirc t})$, so if $f\in\C(G,S,B)$ then $\phi_{ht}\circ f\in B$ for all $h\in H, t\in \mathcal{T}$, and hence $f|_{\Ecirc t} \in\C(Ht,S,B)$ for each $t\in\mathcal{T}$.      It follows that $\C(G,S,B)\leq \bigoplus_{t\in\mathcal{T}} \C(Ht,S,B)$.

		To prove that equality holds, fix $t\in\mathcal{T}$ and let $f\in\C(Ht,S,B)$. Define $\widehat f\in\F^E$ by 
		\[
		(e) \widehat{f} = \left\{ \begin{array}{cc}
			(e)f& \text{if}\ e\in \Ecirc t \\
			0   & \text{otherwise.}
		\end{array} \right.
		\]
		Then $\widehat{f}\in\F^E$. For $ht\in Ht$ we have shown above that $\phi_{ht}\circ \widehat{f} = \phi_{ht}\circ (\widehat{f}|_{\Ecirc t})=  \phi_{ht}\circ f$ which, by \eqref{e:cHt}, lies in $B$. For all other elements $ht'\in Ht'$ with $t'\in \mathcal{T}\setminus\{t\}$, $\phi_{ht'}\circ \widehat{f} = \phi_{ht'}\circ (\widehat{f}|_{\Ecirc t'})= 0\in B$. Thus $\widehat{f}\in\C(G,S,B)$ and the $t^{th}$ component of $\widehat{f}$ is $f$. We conclude that  $\C(G,S,B)=\bigoplus_{t\in \mathcal{T}} \C(Ht,S,B) \cong \C(H,S,B)^u$, and part (c) is proved.
		
		(d) It follows from Proposition~\ref{p:cayc-props}(a) and part (c) that the rates $r_G$, $r_H$ of $\C(G,S,B)$ and $\C(H,S,B)$ satisfy
		\[
		r_G=\frac{2\cdot \dim(\C(G,S,B))}{|G|\cdot |S|}
		=\frac{2\cdot u\cdot \dim(\C(H,S,B))}{u\cdot |H|\cdot |S|} = r_H
		\]
		so $\C(G,S,B)$ and $\C(H,S,B)$ have the same rate. Further it follows from part (c) that a non-zero codeword in $\C(G,S,B)$ of minimum weight must lie in one of the direct summands and by part (b), $\C(H,S,B)$ must contain such a codeword, say of weight $d$. Hence $\C(H,S,B)$ and $\C(G,S,B)$ have the same minimum distance $d$, while the relative distances of $\C(H,S,B)$ and $\C(G,S,B)$ are, by Proposition~\ref{p:cayc-props}(a), $\frac{2d}{|H|\cdot |S|}$ and $\frac{2d}{|G|\cdot |S|}$ respectively. Finally, it follows from Proposition~\ref{p:cayc-props}(c) that $G$ acts transitively on the direct summands of $\C(G,S,B)$ in part (c), and hence $\C(H,S,B)$ is symmetric if and only if $\C(G,S,B)$ is symmetric.
	\end{proof}
	
	As a consequence of Theorem~\ref{thm:disccaycode}, when studying Cayley codes we usually assume that the Cayley graph is connected.
	
	\section{Different decompositions of Cayley codes}\label{s:decomp}
	
	In this section we discuss a process to identify a family of symmetric Cayley codes  related to a given Cayley code, which  allows us to embed the given Cayley code into a direct sum of symmetric Cayley codes (using Proposition~\ref{p:cayc-props}(b)). However the given Cayley code itself only admits a decomposition if the input code $B$ admits a corresponding decomposition. The  process discussed in this section does not necessarily preserve Cayley graph connectivity, so one should perform this decomposition to symmetric Cayley codes first, if available, and then reduce to the connected case using Section~\ref{ss:disc}.
	
	We will eventually use the natural decomposition of the Cayley graph $\Cay(G,S)$ given in Notation~\ref{not:L} and Lemma~\ref{l:L} corresponding to the $(G\rtimes L)$-orbits on edges of the associated Cayley graph, but first we consider a more general decomposition.
	
	\begin{hypothesis}\label{hyp1}
		Let $G$ be a finite group, let  $S$ a non-empty inverse-closed subset of $G\setminus\{1\}$, let $\F$ be a finite field, and let  $B\leq \F^S$, a linear code.  
		\begin{enumerate}
			\item[(a)] 
			Suppose that $S$ is the disjoint union  $S=\cup_{i\in\mathbf{I}}S_i$, where each $S_i=S_i^{-1}$ is inverse-closed. 
			Then  $\F^S=\oplus_{i\in\mathbf{I}} \F^{S_i}$, and  for each $i\in \mathbf{I}$, if $B_i$ is the restriction of $B$ to $\F^{S_i}$, then  $B$ is  subdirect in $ \oplus_{i\in\mathbf{I}}B_i \leq \F^S$ (projects onto each direct summand).
			\item[(b)]
			Let $\Gamma=\Cay(G,S) = (G,E)$ and let $\Gamma_i=\Cay(G,S_i)$, for $i\in\mathbf{I}$, so $E\Gamma=E$ is the disjoint union  $\cup_{i\in\mathbf{I}} E\Gamma_i$  and $\F^E=\oplus_{i\in\mathbf{I}} \F^{E\Gamma_i}$.  
		\end{enumerate}
	\end{hypothesis}

	\begin{proposition} \label{prop:SiBi}
		Suppose that Hypothesis~\ref{hyp1} holds. 
		Then  the Cayley code $\C(G,S,B)\leq \C(G,S,\oplus_{i\in\mathbf{I}}B_i)$ and $\C(G,S,\oplus_{i\in\mathbf{I}}B_i)=\oplus_{i\in\mathbf{I}} \C(G,S_i,B_i)$ holds. \end{proposition}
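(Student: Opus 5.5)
The plan has two parts, matching the two assertions. For the inclusion, I will invoke Proposition~\ref{p:cayc-props}(b). By Hypothesis~\ref{hyp1}(a), $B$ is contained in $\oplus_{i\in\mathbf{I}}B_i$: each $b\in B$ decomposes as $b=(b|_{S_i})_{i\in\mathbf{I}}$ by Lemma~\ref{lem:Vdecomp}(a), and $b|_{S_i}\in B_i$ by the definition of $B_i$ as the restriction of $B$. Applying Proposition~\ref{p:cayc-props}(b) with $B_1=B$ and $B_2=\oplus_i B_i$ then gives $\C(G,S,B)\leq \C(G,S,\oplus_i B_i)$ directly.

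For the equality, I will use the decomposition $E=\cup_{i} E\Gamma_i$ from Hypothesis~\ref{hyp1}(b). By Lemma~\ref{lem:Vdecomp}(a), each $f\in\F^E$ is written as $f=(f_i)_{i\in\mathbf{I}}$ with $f_i=f|_{E\Gamma_i}\in\F^{E\Gamma_i}$. The key compatibility to check is that, for each $g\in G$, the bijection $\phi_g:S\to E_g$ of \eqref{e:phig} maps $S_i$ onto $E_g\cap E\Gamma_i=\{\{g,sg\}\mid s\in S_i\}$. Hence its restriction $\phi_g|_{S_i}$ is exactly the map $\phi^{(i)}_g$ used to define $\C(G,S_i,B_i)$.

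The point needing care is that an edge $\{g,sg\}$ with $s\in S_i$ belongs to $E\Gamma_i$ and to no other $E\Gamma_j$. The same edge can also be written $\{sg, s^{-1}(sg)\}$, so disjointness of the $E\Gamma_i$ relies on $S_i=S_i^{-1}$; this is what makes Hypothesis~\ref{hyp1}(b) a genuine partition. From this it follows that $(\phi_g\circ f)|_{S_i}=\phi^{(i)}_g\circ f_i$.

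With the compatibility in hand, the equality is a chain of equivalences. We have $f\in\C(G,S,\oplus_iB_i)$ if and only if, for all $g$, $\phi_g\circ f\in\oplus_iB_i$. This holds if and only if, for all $g$ and all $i$, $(\phi_g\circ f)|_{S_i}=\phi^{(i)}_g\circ f_i\in B_i$, which is the statement that $f_i\in\C(G,S_i,B_i)$ for every $i$. Under the identification $\F^E=\oplus_i\F^{E\Gamma_i}$, this is exactly $\C(G,S,\oplus_iB_i)=\oplus_{i}\C(G,S_i,B_i)$. I expect no real obstacle here; the only subtle point is the bookkeeping of edges via inverse-closure noted above.
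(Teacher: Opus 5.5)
Your proposal is correct and follows essentially the same route as the paper: the inclusion comes from Proposition~\ref{p:cayc-props}(b), and the equality comes from the compatibility $(\phi_g\circ f)|_{S_i}=\phi_g|_{S_i}\circ f_i$, which holds because $(S_i)\phi_g\subseteq E\Gamma_i$. The differences are only cosmetic: you run both inclusions as a single chain of equivalences where the paper proves the reverse inclusion by extending each $f_i$ by zero to $\widehat{f_i}$, and you check explicitly via $S_i=S_i^{-1}$ that the $E\Gamma_i$ are disjoint, which the paper simply takes from Hypothesis~\ref{hyp1}(b).
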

	
	\begin{remark}\label{r:SiBi}
		{\rm Certainly $G, S, B$ and the disjoint union $S=\cup_{i\in\mathbf{I}}S_i$ determine the $B_i$ and hence the direct decomposition $\oplus_{i\in\mathbf{I}} \C(G,S_i,B_i)$. However it is not clear to us whether the inclusion $\C(G,S,B)\leq \oplus_{i\in\mathbf{I}} \C(G,S_i,B_i)$ is necessarily subdirect in general.
		}    
	\end{remark}

	\begin{proof}
		By Proposition~\ref{p:cayc-props}(b), $\C(G,S,B)\leq \C(G,S,\oplus_{i\in\mathbf{I}}B_i)$. Next we verify the  direct decomposition. Let $f\in\C(G,S,\oplus_{i\in\mathbf{I}}B_i)$. Then $f\in\F^E=\oplus_{i\in\mathbf{I}} \F^{E\Gamma_i}$, so $f=\oplus_{i\in\mathbf{I}}f_i$ where $f_i = f|_{E\Gamma_i}$. The defining condition for membership of $\C(G,S,\oplus_{i\in\mathbf{I}}B_i)$ is that, for all $g\in G$, $\phi_g\circ f\in \oplus_{i\in\mathbf{I}}B_i$ with $\phi_g$ as in \eqref{e:phig}. For each $i$, the restriction $\phi_{i,g}:=\phi_g|_{S_i}$ has image 
		$(S_i)\phi_{i,g}=(S_i)\phi_g = \{ \{g,sg\}\mid s\in S_i\} \subseteq E\Gamma_i$, and hence
		\[
		f_i\in\F^{E\Gamma_i}\quad \mbox{with}\quad \phi_{i,g}\circ f_i=(\phi_g\circ f)|_{S_i} \in \F^{S_i}.
		\]
		Moreover $\phi_g\circ f\in \oplus_{i\in\mathbf{I}} B_i$, and hence   $\phi_{i,g}\circ f_i=(\phi_g\circ f)|_{S_i} \in B_i$. Since this holds for all $g\in G$, we have $f_i\in  \C(G,S_i,B_i)$, and since this holds for all $i\in\mathbf{I}$, it follows that $\C(G,S,\oplus_{i\in\mathbf{I}}B_i)\leq \oplus_{i\in\mathbf{I}} \C(G,S_i,B_i)$.  On the other hand, for each $i$ and each $f_i\in \C(G,S_i,B_i)$, let $\widehat{f_i}:E\to\F$ be the function such that the restriction of $\widehat{f_i}$ to $E\Gamma_j$ is the constant zero function if $j\ne i$, and is equal to $f_i$ if $j=i$. Then for each $g\in G$ we have
		$\phi_g\circ \widehat{f_i} \in B_i\leq \oplus_{i\in\mathbf{I}}B_i$, and hence $\widehat{f_i}\in\C(G,S,\oplus_{i\in\mathbf{I}}B_i)$. 
		Thus we have proved that $\C(G,S,\oplus_{i\in\mathbf{I}}B_i)=\oplus_{i\in\mathbf{I}} \C(G,S_i,B_i)$ holds.
	\end{proof}
	
	We now deduce Theorem~\ref{t:SiBi} from this general result.
	
	\bigskip\noindent
	\emph{Proof of Theorem~\ref{t:SiBi}.}\quad 
	Let $\Gamma=\Cay(G,S)$ with $S$ a non-empty inverse closed subgroup of $G\setminus\{1\}$, let $B\leq \F^S$, and
	let $L=A(G,S,B)$ as in \eqref{e:Autcc}. Then $G, S, L$ determine the partition $S=\cup_{i\in\mathbf{I}}S_i$as in Notation~\ref{not:L}. By Lemma~\ref{l:L}(b), $\Gamma$ is the edge disjoint union of its Cayley subgraphs $\Gamma_i=\Cay(G,S_i)$, $G\rtimes L$ acts as automorphisms on each of them and is transitive on each $E\Gamma_i$, so each $\Gamma_i$ is normal edge-transitive. Also it follows from Proposition~\ref{p:cayc-props}(d) that each $\C(G, S_i,B_i)$ is symmetric, proving the first assertion. Now assume that $B=\oplus_{i\in\mathbf{I}}B_i$. Then by Proposition~\ref{prop:SiBi}, $\C(G,S,B)=\oplus_{i\in\mathbf{I}} \C(G,S_i,B_i)$, completing the proof. 
	
	\section{Some examples of Cayley codes}\label{s:ex}
	
	In this section we present several families of Cayley codes, where we specify certain possibilities for the local input code $B$ (Subsection~\ref{ss:repaug}) or the Cayley graph $\Cay(G,S)$ (Subsection~\ref{ss:cycle}).  
	
	\subsection{$B$ the repetition code or augmentation code}\label{ss:repaug}
	
	For a finite set $S$,  the \emph{repetition code} $R(S)<\mathbb{F}^S$ consists of all constant functions $c\in\F$, for $c\in\F$, where  $f_c:x\to c$ for $x\in S$. Also the \emph{augmentation code} $A(S) <\mathbb{F}^S$ is the co-dimension $1$ subspace $A(S):=\{f\in\F^S\mid \sum_{x\in S}(x)f=0\}$. Both $R(S)$ and $A(S)$ are invariant under the full symmetric group $\Sym(S)$, and hence may be chosen as the local input code $B$ for any Cayley graph.

	\begin{lemma}\label{lem:rep}
		Let $\Cay(G,S)=(G,E)$, $\F, B$ be as in Notation~\ref{not:def}, and let $R(S), A(S)$ be as above. Then
		\begin{enumerate}
			\item[(a)] If $B=R(S) < \F^S$ and $\Cay(G,S)$ is connected, then $\C(G,S,B)$ is the repetition code  $R(E) <\F^E$.
			\item[(b)] If $B=A(S)$ and $\F$ has odd characteristic, then $\C(G,S,B)\leq A(E)$,  the augmentation code in $\F^E$.
		\end{enumerate}
	\end{lemma}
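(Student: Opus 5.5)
For part (a) I would prove the two inclusions $R(E)\leq \C(G,S,R(S))$ and $\C(G,S,R(S))\leq R(E)$ separately. The first is immediate: if $f\in\F^E$ is the constant function with value $c$, then for every $g\in G$ the composite $\phi_g\circ f$ is the constant function on $S$ with value $c$, which lies in $R(S)$. Hence $f\in\C(G,S,R(S))$ by \eqref{e:cc}. Connectivity is not needed for this inclusion.

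For the reverse inclusion, take $f\in\C(G,S,R(S))$. For each vertex $g$, the condition $\phi_g\circ f\in R(S)$ says that $f$ is constant on the edge set $E_g$ of \eqref{e:eg}; call this common value $c_g$. If $g$ and $sg$ are adjacent, the edge $\{g,sg\}$ lies in both $E_g$ and $E_{sg}$, so $c_g=c_{sg}$. Thus $g\mapsto c_g$ is constant along edges, and therefore constant on each connected component. Since $\Cay(G,S)$ is connected (equivalently $\la S\ra=G$ by Lemma~\ref{lem:cay1}(c)), all the $c_g$ are equal. Every edge lies in some $E_g$, so $f$ is constant, that is, $f\in R(E)$.

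For part (b), take $f\in\C(G,S,A(S))$. For each $g\in G$ we have $\sum_{s\in S}(\{g,sg\})f=0$. I would sum this identity over all $g\in G$ and regroup the double sum by edges. The key counting point is that each edge $e=\{x,y\}$ of $E$ lies in exactly two of the sets $E_g$, namely $E_x$ and $E_y$, since there are no loops ($1\notin S$). Moreover, within $E_x$ the edge $e$ arises from exactly one $s\in S$, namely $s=yx^{-1}$, because $\phi_x$ is a bijection. Hence
\[
0=\sum_{g\in G}\sum_{s\in S}(\{g,sg\})f = 2\sum_{e\in E}(e)f .
\]
The point needing care is exactly this double-counting argument, including the case of involutions $s=s^{-1}$: the edge $\{g,sg\}$ is then still counted once from $g$ and once from $sg$, so the count of two is unaffected. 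Since $\F$ has odd characteristic, $2$ is invertible, so $\sum_{e\in E}(e)f=0$ and $f\in A(E)$.
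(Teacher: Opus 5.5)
Your proposal is correct and follows the paper's proof. In (a) you propagate local constancy through the connected graph, phrased via vertex values $c_g$ that agree across each edge rather than the paper's induction along a shortest edge-path from $\{1,s\}$, which is the same idea. In (b) you use the same double count of edges over the sets $E_g$ followed by division by $2$.
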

	
	\begin{proof}
		(a) Suppose that $B=R(S)$. Then it follows from \eqref{e:cc} that $R(E)\leq \C(G,S,B)$. Conversely let $f\in\C(G,S,B)$ and $s\in S$.  Then  $e_0=\{1,s\}\in E$, and we let $c=(e_0)f$, and $e\in E$ be an arbitrary edge with $e\ne e_0$. We will show that $(e)f=c$, and from this it follows that $f=f_c\in R(E)$, proving that $\C(G,S,B)=R(E)$. Since $\Cay(G,S)$ is connected there is a path (an edge-sequence) $ e_0,e_1,\dots,e_t$ in $\Cay(G,S)$   such that $e_t=e$ and consecutive edges are incident. Choose a path with $t$ minimal, so $t\geq 1$ since $e\ne e_0$. Also, for each $i<t$, the edges $e_i $ and $e_{i+1}$ are distinct by the minimality of $t$ and we let   $\{g_i\} = e_i\cap e_{i+1}$. We will prove by  induction on $i\leq t$ that  $(e_i)f=(e_{i-1})f=\dots =(e_0)f= c$. If $i=1$ then both $e_0$ and $e_1$ are incident with $g_0$ and by \eqref{e:cc}, $\phi_{g_0}\circ f \in B=R(S)$, so $f$ is constant on the edges incident with $g_0$. Thus $(e_1)f=(e_0)f=c$, and we are finished if $t=1$. Now assume that $t>1$, that $1<i\leq t$, and assume inductively that the claim holds for $i-1$, that is to say, $(e_{i-1})f=\dots =(e_0)f= c$. Now  $\{g_{i-1}\} = e_{i-1}\cap e_{i}$ and, again by \eqref{e:cc}, $\phi_{g_{i-1}}\circ f \in B=R(S)$, so $f$ is constant on the edges incident with $g_{i-1}$. Thus $(e_i)f=(e_{i-1})f=c$, and by induction the claim holds for all $i\leq t$. Thus $(e_t)f=c$, and as discussed above, part (a) follows.

		(b) Let $f\in\C(G,S,B)$. Then, for all $g\in G$, $\phi_g\circ f\in A(S)$. This implies that $\sum_{e\in E_g}(e)f=0$, where $E_g$ is as in \eqref{e:eg}. Now as $g$ ranges over $G$, the sets $E_g$ cover each edge exactly twice, since an edge $\{x,y\}$ lies precisely in the edge sets $E_x$ and $E_y$. Thus  
		\[
		0=\sum_{g\in G} \left(\sum_{e\in E_g}(e)f\right) = 2 \sum_{g\in G} (e)f
		\]
		and since $\F$ has odd characteristic, this implies that $\sum_{g\in G} (e)f=0$, so $f\in A(E)$.
	\end{proof}
	
	The Cayley code in part (b) above can be significantly smaller than the augmentation code. We give a family of examples in Subsection~\ref{ss:cycle} for which  this is the case with the Cayley graph being a cycle of arbitrary length.

	\subsection{Cayley codes for cycles}\label{ss:cycle}
	
	For $n\geq3$, the cycle $\Gamma=\C_n$ of length $n$ has vertex set  $V\Gamma=\mathbb{Z}_n=\{0,1,\dots, n-1\}$, which we often view as the additive group of integers modulo $n$, and edge-set $E=E\Gamma=\{e_0,\dots,e_{n-1}\}$, where $e_i=\{i-1,i\}$ for each $i$.  
	It is a Cayley graph for the additive group $G=\mathbb{Z}_n$ relative to the inverse-closed generating set $S=\{-1, 1\}$. Thus $\Gamma =\Cay(G,S)$.  
	Note that, for  $i \in \mathbb{Z}_n$, the map $\phi_s$ from \eqref{e:phig} is $\phi_i: s \mapsto \{i,i+s\}$ where $s \in \{-1,1\}$. 
	
	To construct a
	Cayley code for $\Gamma$, we require a linear code $B \leq \mathbb{F}^S \cong
	\mathbb{F}^2$. By Proposition~\ref{p:cayc-props}(b), $\mathbf{C}(G,S,B)$ is the zero code if $B=0$ and is the complete code $\mathbb{F}^E$ if $B=\mathbb{F}^S$, so our interest is in the remaining cases where $B$  is a one-dimensional subspace of $\mathbb{F}^S$. For $\mathbb{F}=\mathbb{F}_q$ there are exactly $q+1$ such subspaces, and we label them as 
	\[
	B=B_a = \langle b_a\rangle\ \textbf{for $a\in\mathbb{F}_q\cup \{\infty\}$, where}\ 
	b_\infty: -1\to 0\  \textbf{and }\ 1\to 1,
	\]
	and for $a\in\mathbb{F}_q$,  $b_a:-1\to 1$ and $1\to a$. The corresponding Cayley codes are   
	\begin{equation}\label{e:Ca}
		\mathbf{Cyc}_a :=\mathbf{C}(G,S,B_a) = \{f \in \mathbb{F}^E \, | \, \phi_g \circ f \in B_a
		\textrm{ for all } g \in G\}.     
	\end{equation}
	We have the following possibilities for these Cayley codes.
	
	\begin{proposition}\label{lem:cycle1}
		Let $\Gamma, n, G, B_a, \mathbf{Cyc}_a$ be as above. Then 
		\begin{enumerate}
			\item[(a)] if either $a=\infty$ or $a^n\ne 1$, then $\mathbf{Cyc}_a=0$;
			\item[(b)] if $a\in\mathbb{F}_q$ and $a^n=1$, then $\mathbf{Cyc}_a=\langle f_a\rangle$ where $(e_i)f_a=a^i$ for $i\in\mathbb{Z}_n$.
		\end{enumerate}
	\end{proposition}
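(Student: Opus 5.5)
The plan is to write out, vertex by vertex, the local condition \eqref{e:cc} for $f\in\mathbf{Cyc}_a$ and propagate it around the cycle. Fix $f\in\F^E$ and write $x_i:=(e_i)f$ for $i\in\mathbb{Z}_n$. At the vertex $i$, the edge $\{i,i-1\}$ is $e_i$ and the edge $\{i,i+1\}$ is $e_{i+1}$. Hence $\phi_i\circ f$ is the function $-1\mapsto x_i$, $1\mapsto x_{i+1}$. The condition $\phi_i\circ f\in B_a=\langle b_a\rangle$ therefore says that $(x_i,x_{i+1})$ is a scalar multiple of $(1,a)$ when $a\in\F_q$, and of $(0,1)$ when $a=\infty$.

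For $a=\infty$, the condition at vertex $i$ forces $x_i=0$. Since every $i\in\mathbb{Z}_n$ occurs as a vertex, every coordinate vanishes and $\mathbf{Cyc}_\infty=0$.

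For $a\in\F_q$, the condition at vertex $i$ is exactly $x_{i+1}=a\,x_i$: the pair $(x_i,x_{i+1})=c(1,a)$ gives $c=x_i$. Conversely, any pair satisfying $x_{i+1}=ax_i$ lies in $B_a$. So $\mathbf{Cyc}_a$ is the solution space of the $n$ recurrences $x_{i+1}=ax_i$ with indices taken mod $n$.

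Iterating from $i=0$ gives $x_i=a^ix_0$ for $0\le i\le n-1$. The last relation, at vertex $n-1$, then closes the cycle: $x_0=a\,x_{n-1}=a^nx_0$.
\begin{itemize}
\item If $a^n\ne1$, this forces $x_0=0$, hence $f=0$. Together with the case $a=\infty$, this gives part (a).
\item If $a^n=1$, the closing relation holds automatically. The solutions are exactly $x_0f_a$ with $(e_i)f_a=a^i$, and $f_a$ is well defined on $\mathbb{Z}_n$ precisely because $a^n=1$. Then $f_a\ne0$ since $(e_0)f_a=1$, so $\mathbf{Cyc}_a=\langle f_a\rangle$, which is part (b).
\end{itemize}

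The only delicate point is the bookkeeping: identifying $\phi_i(-1)=e_i$ and $\phi_i(1)=e_{i+1}$ under the labelling $e_i=\{i-1,i\}$, and handling the wrap-around index so that $a^n$ appears. Note that $a=0$ satisfies $a^n\ne1$ and is covered by the general argument above, so it needs no separate treatment.
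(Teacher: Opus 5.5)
Your proof is correct and follows the same route as the paper: you propagate the local condition $\phi_i\circ f\in B_a$ around the cycle, and the wrap-around at vertex $n-1$ forces $a^n=1$. Writing the condition as the recurrence $x_{i+1}=ax_i$ is a tidier version of the paper's argument. It lets you treat $a=0$ together with the other $a\in\F$ and dispose of $a=\infty$ at once, since each vertex condition gives $x_i=0$; the paper instead splits into cases according to whether $\phi_0\circ f$ vanishes and handles $a\in\{0,\infty\}$ separately.
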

	
	\begin{proof}
		By the definition of a Cayley code in \eqref{e:cc}, an element $f\in\F^E$ lies in $\mathbf{Cyc}_a$ if and only if $\phi_i\circ f\in B_a$ for all $i\in G=\mathbb{Z}_n$ where $\phi_i:s\to \{i, i+s\}$ (for $s\in S=\{-1,1\}$) as in \eqref{e:phig}. 
		
		Suppose first that $a\not\in\{\infty,0\}$. Then the only element $b\in B_a$ such that the image $(S)b$ contains $0$ is the zero element $0\in\F^S$. Suppose first that $\phi_0\circ f$ is the zero element $0\in \F^S$, that is to say,  $(e_0)f=(e_1)f=0$. Then $(S)(\phi_1\circ f)$ contains $(e_1)f=0$ and hence $\phi_1\circ f$ lies in $B_a$ if and only if $\phi_1\circ f = 0\in\F^S$. An inductive argument using the same reasoning yields that $f\in\mathbf{Cyc}_a$ if and only if $\phi_i\circ f = 0$ for all $i$, and hence $f=0\in\F^E$.  Now suppose that $\phi_0\circ f$ is a non-zero element of $\F^S$. A necessary condition for $f$ to lie in $\mathbf{Cyc}_a$ is that   $\phi_0\circ f \in B_a$, and this holds if any only if $\phi_0\circ f$ is a scalar multiple of the element $b_a$ defined above, and without loss of generality we  assume that $\phi_0\circ f=b_a$. Note that  $\phi_0\circ f=b_a$ is equivalent to $(e_0)f= 1, (e_1)f=a$. In particular $(-1)(\phi_1\circ f)=(e_1)f=a$, and hence $\phi_1\circ f \in B_a$ if and only if $\phi_1\circ f =a\, b_a$ (equivalently $(1)(\phi_1\circ f)=(e_2)f=a^2$). This time the same inductive argument shows that, for all $i\in G$, $\phi_i\circ f \in B_a$ if and only if $\phi_i\circ f =a^i b_a$ (equivalently $(e_i)f=a^i, (e_{i+1})f=a^{i+1}$). In order for this condition to hold for $i=n-1$ we require $(e_0)f=(e_{(n-1)+1})f=a^{(n-1)+1}=a^n$, but we already have $(e_0)f=1$. Thus, if $a^n\ne 1$ then the code $\mathbf{Cyc}_a=0$, while if $a^n=1$ then $\mathbf{Cyc}_a=\langle f_a\rangle$ with $f_a$ as in part (b). 
		
		It remains to consider the cases $a\in\{\infty, 0\}$. The arguments for these cases are similar. Suppose first that $f\in\mathbf{Cyc}_a$ and that $\phi_0\circ f = 0\in \F^S$, that is, $(e_0)f=(e_1)f=0$. If $a=1$ then the only element $b\in B_a$ with $(-1)b=0$ is the zero element of $\F^S$, and the argument in the previous paragraph shows that $\phi_i\circ f = 0\in \F^S$ for all $i$ so that $f=0\in\F^E$. Similarly if $a=\infty$ then the only element $b\in B_a$ with $(1)b=0$ is  $0\in \F^S$, and again we find that $\phi_i\circ f = 0$ for all $i$ and $f=0\in\F^E$. Now assume that $f\in\mathbf{Cyc}_a$ and that $\phi_0\circ f$ is a non-zero element of $B_a$ (a necessary condition for $f\in\mathbf{Cyc}_a$), so $\phi_0\circ f$ is a scalar multiple of $b_a$, and without loss of generality we  assume that $\phi_0\circ f=b_a$. Note that  $\phi_0\circ f=b_a$ is equivalent to $(e_0)f= 1, (e_1)f=0$ if $a=0$, and to $(e_0)f= 0, (e_1)f=1$ if $a=\infty$. If $a=0$ then $(-1)(\phi_1\circ f)=(e_1)f=0$, and as the only element $b\in B_a$ with $(-1)b=0$ is the zero element, we have by \eqref{e:Ca} that $\phi_1\circ f=0\in\F^S$ and, in turn, the usual inductive argument shows that $f=0\in \F^E$, which is a contradiction. Thus $\mathbf{Cyc}_0=0$. Finally suppose that $a=\infty$.  Then the only element $b\in B_\infty$ with $(1)b=0$ is the zero element, and since $(1)\phi_{n-1}\circ f = (e_0)f=0$, it follows from \eqref{e:Ca} that $\phi_{n-1}\circ f=0\in\F^S$. The usual inductive reasoning shows that $\phi_{n-i}\circ f=0\in\F^S$ for all $i$, and hence that $f=0\in\F^E$, whence  $\mathbf{Cyc}_\infty=0$,  completing the proof. 
	\end{proof}
	
	\begin{remark}\label{r:cycle}
		{\rm 
			(a)  Note that, taking $a=1$, the input code $B_1$ is the repetition code $R(S)$, and since  $a^n=1^n=1$, it follows from Proposition~\ref{lem:cycle1}(b) that the corresponding Cayley code $\mathbf{Cyc}_1=\langle b_1\rangle$, which is the repetition code in $\F^E$, in agreement with Lemma~\ref{lem:rep}(a).
			
			\medskip\noindent
			(b) Similarly, taking $a=-1$, the input code $B_{-1}$ is the augmentation code $A(S)$, and if either $n$  or $q$ is even then  $a^n=(-1)^n=1$, so by  Proposition~\ref{lem:cycle1}(b) the corresponding Cayley code $\mathbf{Cyc}_{-1}=\langle b_{-1}\rangle$ is a one-dimensional subspace of the augmentation code $A(E)$ in $\F^E$, giving examples for proper inclusion in Lemma~\ref{lem:rep}(b). 
			
			\medskip\noindent
			(c) If $n$ is even, then $\Gamma=\mathbf{C}_n$ is also a Cayley graph for the dihedral group $D=\langle \sigma^2, \tau\sigma\rangle$ of order $n$, where 
			\[
			\sigma:i\to i+1\quad \text{and}\quad \tau: i\to -i, \quad \text{for $i\in \mathbb{Z}_n$}
			\]
			since $D$ is a  subgroup of $\Aut(\Gamma)=\langle \sigma, \tau\rangle\cong D_{2n}$ acting regularly on vertices. Namely $\Gamma=\Cay(D,S')$ for the generating set $S'=\{\sigma\tau, \tau\sigma\}$ of $D$, and we label the edges in the edge-set $E'$ as  $e_0'=\{1,\sigma\tau\}=\{ 1, \tau\sigma^{-1}\}$, $e_1'=\{1, \tau\sigma\}$ and for  $i=1,\dots,(n-2)/2$, 
			\[
			e_{2i}'= (e_0'){\sigma^{2i}} = \{ \sigma^{2i}, \tau\sigma^{2i-1}\},\quad e_{2i+1}'= (e_1'){\sigma^{2i}} = \{ \sigma^{2i}, \tau\sigma^{2i+1}\}.
			\]
			A similar analysis to that given in Proposition~\ref{lem:cycle1} yields pairwise distinct one-dimensional (rank 1) Cayley codes 
			$\mathbf{C}(D, S', B_a')=\langle f_a'\rangle$ for distinct $a\in\F\setminus\{0\}$. Here the  input code $B_a'=\langle b_a'\rangle$ where $b_a': \sigma\tau\to 1, \tau\sigma\to a$, and the  generator $f_a'\in\F^{E'}$ maps $e'_{2i}\to 1$ and $e'_{2i+1}\to a$ for $i=0,\dots,(n-2)/2$. 
			
		}
	\end{remark}

	\section{Products of graphs and Cayley codes}\label{s:prod}
	
	Several product constructions for graphs take the vertex set to be the Cartesian product of the vertex sets of  two input graphs. We consider two such constructions defined as follows, and then discuss implications for Cayley codes under these graph constructions. 
	
	\begin{definition}\label{def:prod}
		{\rm 
			Let $\Gamma=(V\Gamma, E\Gamma)$ and $\Sigma=(V\Sigma, E\Sigma)$ be graphs. For each of the product constructions below the vertex set  is $V\Gamma\times V\Sigma$. 
			
			\begin{enumerate}
				\item[(a)] The \emph{(direct) product} $\Gamma\times\Sigma$ has edge set (see \cite[Section 6.3]{GR}): 
				\[
				E(\Gamma\times\Sigma)=\{ \{(\alpha,\sigma),(\alpha',\sigma')\} \mid \{\alpha, \alpha'\}\in E\Gamma\ \mbox{and}\ \{\sigma,\sigma'\}\in E\Sigma\}.
				\]
				
				\item[(b)] The \emph{Cartesian product} $\Gamma\,\square\,\Sigma$ has edge set 
				\cite[Section 7.14]{GR}: 
				\[
				E(\Gamma\,\square\,\Sigma)=\{ \{(\alpha,\sigma),(\alpha',\sigma')\} \mid 
				\begin{array}{c}
					\alpha= \alpha'\  \mbox{with}\ \{\sigma,\sigma'\}\in E\Sigma,\  \\ 
					\mbox{or}\ \{\alpha, \alpha'\}\in E\Gamma\  \mbox{with}\ \sigma=\sigma' 
				\end{array} 
				\}.
				\]
			\end{enumerate}
		}
	\end{definition}
	
	Let $\Delta$ be $\Gamma\,\times\,\Sigma$ or $\Gamma\,\square\,\Sigma$ as in Definition~\ref{def:prod}. Then $\Aut(\Delta)$ contains $\Aut(\Gamma)\times\Aut(\Sigma)$ in its natural product action, that is,  
	$(g,h):(\alpha,\sigma)\to (\alpha^g, \sigma^h)$ for  $(g,h)\in \Aut(\Gamma)\times\Aut(\Sigma)$ and $(\alpha,\sigma)\in V\Delta$. In particular, if $G\leq \Aut(\Gamma)$ and $H\leq \Aut(\Sigma)$ with $G, H$ regular on $V\Gamma, V\Sigma$, respectively, then $G\times H$ is regular on $V\Delta$, and consequently, for Cayley graphs $\Gamma = \Cay(G,S)$ and $\Sigma=\Cay(H,T)$, the product graph $\Delta$ is a Cayley graph for $G\times H$, see \cite[Lemmas 3.7.1 and 3.7.2]{GR}: the `joining sets' are $S\times T$ for the direct product, and $S\dot\cup T$ for the Cartesian product (identifying $S, T$ with subsets of $G\times H$). More generally, if $\Gamma$ and $\Sigma$ are both vertex-transitive, then also $\Delta$ is vertex-transitive.
	Also edge-transitivity of $\Gamma, \Sigma$ guarantees edge-transitiv\-ity of $\Gamma \times \Sigma$, but in general does not guarantee this for the Cartesian product $\Gamma\,\square\,\Sigma$. Thus  Cayley codes for direct products of Cayley graphs may have more desirable properties than those for Cartesian products.  We now introduce the Cayley codes for these products where  $\Gamma$ and $\Sigma$ are both Cayley graphs. 
	
	\subsection{Cayley codes for Cartesian products}\label{s:cartprod}
	We consider Cayley graphs $\Gamma = \Cay(G,S)$ and $\Sigma=\Cay(H,T)$ for finite groups $G, H$ with inverse-closed generating sets $S, T$ of $G, H$, respectively. For notational simplicity we identify the subset $S$ of $G$  with the subset $S\times 1=\{ (s,1_H) \mid s \in S \}$ of $G \times H$, and similarly we identify $T$ with  $1\times T=\{ (1_G,t)  \mid t \in T \}$, and note that each of these  is an inverse-closed subset of $G\times H$. Consider the   $\Delta=\Cay(G \times H, S \dot\cup T)$. Then, as discussed above, $\Delta$ is the Cartesian product $\Gamma \,\square\,\Sigma$ (as in Definition~\ref{def:prod}(b)).
	The generating set $S\dot\cup T$ for this Cayley graph is a disjoint union  as in Hypothesis~\ref{hyp1}(a), and so, for a finite field $\F$ and linear codes $A\leq \F^S$ and $B\leq \F^T$, it follows from Proposition~\ref{prop:SiBi} that the Cayley code $\C(G \times H,S \dot\cup T, A \oplus B)$ has the following decomposition:
	\begin{equation}\label{e:prod1}
		\C(G \times H,S \dot\cup T, A \oplus B)= \C(G \times H , S, A) \oplus \C(G \times H , T, B).     
	\end{equation}
	We make a further analysis of the structure of the two direct summands and prove the following.
	
	\begin{proposition}\label{p:cartesianprod}
		With the notation as above.
		\begin{enumerate}
			\item[(a)] $\C(G \times H , S, A) \cong \C(G,S,A) \otimes \mathbb{F}^{|H|}$;
			\item[(b)] $\C(G \times H , T, B) = \mathbb{F}^{|G|} \otimes \C(H,T,B)$;
		\end{enumerate}
		and the assertion of Theorem~$\ref{t:cprod}$ is valid.
	\end{proposition}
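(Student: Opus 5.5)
The plan is to treat $S$ (identified with $S\times 1$) as an inverse-closed subset of $G\times H$ and apply Theorem~\ref{thm:disccaycode} to the Cayley graph $\Cay(G\times H,S)$. Since $S$ generates $G$, we have $\la S\ra = G\times 1$, which has index $|H|$ in $G\times H$. A natural set of coset representatives is $\mathcal{T}=\{(1,h)\mid h\in H\}$, which contains the identity. The cosets are $(G\times 1)(1,h)=G\times\{h\}$, and each component graph is the copy $\Gamma\times\{h\}$ of $\Gamma=\Cay(G,S)$.

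Theorem~\ref{thm:disccaycode}(a)--(c) then gives
\[
\C(G\times H,S,A)=\bigoplus_{h\in H}\C((G\times 1)(1,h),S,A),
\]
a direct sum of $|H|$ copies of $\C(G\times 1,S,A)$. This last code is identified with $\C(G,S,A)$ via the isomorphism $G\to G\times 1$, which carries $S$ to $S\times 1$, $E\Gamma$ to $\Ecirc$, and each $\phi_g$ to $\phi_{(g,1)}$. A direct sum of $|H|$ copies indexed by $H$ is exactly $\C(G,S,A)\otimes\F^{|H|}$: we identify $\F^{E\Gamma\times H}$ with $\F^{E\Gamma}\otimes\F^{H}$, sending the edge $\{(x,h),(sx,h)\}$ to $(\{x,sx\},h)$. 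Under this identification the summand indexed by $h$ becomes $\C(G,S,A)\otimes \delta_h$, where $\delta_h$ is the indicator function of $h$. This proves (a).

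Part (b) is symmetric. We use $\la T\ra = 1\times H$ with representatives $(g,1)$ for $g\in G$. Here the identification $\F^{G\times E\Sigma}=\F^G\otimes\F^{E\Sigma}$ is literal, because the edges $\{(g,y),(g,ty)\}$ correspond naturally to pairs $(g,\{y,ty\})$. This is why the statement writes equality in (b).

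Finally, substituting (a) and (b) into \eqref{e:prod1} yields the first assertion of Theorem~\ref{t:cprod}. The main obstacle is purely bookkeeping: making the tensor identification precise and checking that the right-multiplication isomorphisms of Theorem~\ref{thm:disccaycode}(b) are compatible with the coordinates $\delta_h$. This amounts to observing that right multiplication by $(1,h)$ sends $\{(x,1),(sx,1)\}$ to $\{(x,h),(sx,h)\}$. I would write this out explicitly on basis vectors and not rely on the abstract isomorphism.

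The second assertion of Theorem~\ref{t:cprod}, on the direct product, belongs to Proposition~\ref{p:dprod}. I would only note here that it is proved separately there.
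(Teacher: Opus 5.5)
Your proposal is correct and follows the paper's own proof. You apply Theorem~\ref{thm:disccaycode}(c) to $\Cay(G\times H,S)$ with coset representatives $(1_G,h)$, $h\in H$, and identify the resulting $|H|$ copies of $\C(G,S,A)$ with $\C(G,S,A)\otimes\F^{|H|}$ via the edge bijection $\hatpsi$ of \eqref{e:psihat}; you then argue symmetrically for (b) and combine with \eqref{e:prod1}, correctly noting that only the first assertion of Theorem~\ref{t:cprod} comes from here and the second from Proposition~\ref{p:dprod}.

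One small quibble: your identification in (a) is exactly as literal as the one in (b), since both are instances of $\hatpsi$. So the $\cong$ versus $=$ in the statement is a notational choice, not a genuine asymmetry.
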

	
	For the proof we use a natural map linking the edge-set of $\Delta$ with those for $\Gamma$ and $\Sigma$. For $A\Delta, A\Gamma, A\Sigma$ the sets of arcs of $\Delta, \Gamma, \Sigma$,  define a natural bijection $\psi \colon A\Delta\to (A\Gamma \times H) \cup (G \times A\Sigma)$ by
	\[
	\psi: (\, (g,h),\ (sg, h)\,) \to  ( \,(g,sg),\ h \,); \quad
	(\, (g,h),\ (g, th)\,) \to  ( \, g,\ (h,th) \,).
	\]
	This induces a bijection  
	$\hatpsi: E\Delta\to (E\Gamma \times H) \dot\cup (G \times E\Sigma)$, namely 
	\begin{equation}\label{e:psihat}
		\hatpsi: \{(g,h),\, (sg, h)\}\to 
		( \,\{g,sg\},\, h \,); \quad \{(g,h),\, (g, th)\}\to 
		( \, g ,\, \{h,th\} \,).   
	\end{equation}

	\begin{proof}
		We give the proof details for part (a), since the proof of part (b) is entirely similar. The group $G_0 :=  G \times \{ 1_H \}$ has index $u := |H|$ in $G \times H$, and $\mathcal{T} := \{ (1_G,h) \mid h \in H \}$ is a set of right coset representatives for $G_0$ in $G\times H$. By Lemma~\ref{lem:cay1}(a), 
		\[
		E_0=\{ \{(g,1_H), (sg,1_H)\} | (g,1_H)\in G_0, s\in S\}  
		\]
		is the edge-set, as in \eqref{e:EH}, for the connected component of $\Cay(G \times H, S)$ containing the identity, and there are exactly $u$ connected components. It follows from Theorem~\ref{thm:disccaycode}(c) that 
		\begin{equation}\label{e:prod2}
			\C(G \times H , S, A) = \oplus_{t \in \mathcal{T}} \, \C(G_0t, S, A)     
		\end{equation}
		is isomorphic to a direct sum of $u$ copies of $\C(G_0, S, A)$ and this, in turn, is isomorphic to $\C(G,S,A) \otimes \mathbb{F}^{|H|}$. The last assertion uses the natural bijection $\hatpsi$ given in \eqref{e:psihat}. Thus part (a) is proved, and part (b) follows by an identical argument. Theorem~\ref{t:cprod} follows from parts (a) and (b) and \eqref{e:prod1}.
	\end{proof}
	
	Finally we deduce information about the parameters (see Table \ref{tab1}) of the Cartesian product cayley code from those of the factor codes.
	
	\begin{remark}\label{r:cprod}
		{\rm
			With the notation as above let $n = |G|$, $n' = |H|$, $\ell = |S|$ and $\ell' = |T|$, and suppose that $\C(G,S,A)$ and $\C(H,T,B)$ have rank $k, k'$ and distance $d, d'$, respectively. We discuss below the equivalent parameters for the Cartesian product Cayley code $\C(G \times H,S \dot\cup T, A \oplus B)$.
			\begin{center}
				\begin{tabular}{c c c c} 
					Parameter  & $\C(G,S,A)$ & $\C(H,T,B)$ & $\C(G \times H,S \dot\cup T, A \oplus B)$  \\ [0.5ex] 
					\hline 
					\text{Length} & $n\ell/2$ & $n'\ell'/2$ & $nn'(\ell+\ell')/2$  \\ 
					\text{Rank} & $k$ & $k'$ & $kn'+k'n$ \\
					\text{Distance} & $d$ & $d'$ & $ \operatorname{min}\{d,d'\}$ \\
					\text{Rate} & $2k/n\ell$ & $2k'/n'\ell'$ & $2(kn'+k'n)/(nn'(\ell+\ell'))$ \\
					\text{Relative distance} & $2d/n\ell$ & $2d'/n'\ell'$ & $ 2\operatorname{min}\{d,d'\}/(nn'(\ell+\ell'))$ \\
					\hline
				\end{tabular}
			\end{center}
			
			The length follows  from the definition of a Cartesian product in Definition \ref{def:prod} (b), and the rank follows  from Theorem \ref{t:cprod}, noting that $\operatorname{Dim}(\C(G,S,A) \otimes \mathbb{F}^{|H|}) = |H| \cdot r = n'r$ and $\operatorname{Dim}(\mathbb{F}^{|G|} \otimes \C(H,T,B)) = |G| \cdot r' = n r'$. A nonzero codeword of $\C(G \times H,S \dot\cup T, A \oplus B)$ of minimum weight must lie in one of the direct summands of \eqref{e:prod1}, say in  $\C(G,S,A) \otimes \mathbb{F}^{|H|}$, and such a minimum weight codeword must lie in one of the direct summands of \eqref{e:prod2}. Since  the direct summands in \eqref{e:prod2} are isomorphic codes, the weight of such a codeword would equal the minimum weight $d$ of a codeword of  $\C(G,S,A)$. The entries for the rate and relative distance follow on dividing the rank and distance by the length.
		}
	\end{remark}

	\subsection{Cayley codes for direct products}\label{s:dirprod}
	
	As in Subsection \ref{s:dirprod} we consider Cayley graphs $\Gamma = \Cay(G,S)$ and $\Sigma=\Cay(H,T)$ for finite groups $G, H$ with inverse-closed generating sets $S, T$ of $G, H$, respectively, and we identify 
	the $S$ and $T$ with the subsets $S\times 1$ and $1\times T$ of $G \times H$. Also, as discussed above, the graph $\Delta=\Cay(G \times H, S \times T)$ is the direct product $\Gamma \times\Sigma$ defined in Definition~\ref{def:prod}(a). The number of edges of $\Delta$ is $\frac{|G\times H|\cdot|S\times T|}{2} = 2\cdot |E\Gamma|\cdot |E\Sigma|$, so the length of a Cayley code for $\Delta$ is twice the product of the lengths of Cayley codes for $\Delta$ and $\Sigma$. We show that the Cayley codes for $\Delta$ are linked with tensor products of Cayley codes for $\Delta$ and $\Sigma$. 
	First we define a natural surjection $\psi$ from $E\Delta$ to $E\Gamma\times E\Sigma$, noting that each edge of $\Delta$ is of the form 
	\begin{equation}\label{eq:e}
		\text{$e_{g,h,s,t}=\{(g,h),(sg,th)\}$, for $g\in G, h\in H, s\in S, t\in T$.}
	\end{equation}
	
	\begin{lemma}\label{l:e}
		Define $\psi: E\Delta\to E\Gamma\times E\Sigma$, by $(e_{g,h,s,t})\psi =  
		( \,\{g,sg\},\, \{h,th\} \,)$, with 
		$e_{g,h,s,t}$ as in \eqref{eq:e}. Then,
		\begin{enumerate}
			\item[(a)]  for $g\in G, h\in H, s\in S, t\in T$, the following conditions are equivalent.
			\begin{enumerate}
				\item[(i)] $e_{g,h,s,t}=e_{g',h',s',t'}$;
				\item[(ii)] either $(g',h',s',t')=(g,h,s,t)$ or $(g',h',s',t')=(sg,th,s^{-1},t^{-1})$;
			\end{enumerate}
			\item[(b)] $\psi$ is a well-defined surjection.
			\item[(c)] If $(e_{g,h,s,t})\psi=(e_{g',h',s',t'})\psi$, then either $e_{g',h',s',t'}=e_{g,h,s,t}$, or 
			$e_{g',h',s',t'}=e_{sg,h,s^{-1},t}=e_{g,th,s,t^{-1}}$. Thus $\psi$ is a $2$-to-$1$ map.
		\end{enumerate}
	\end{lemma}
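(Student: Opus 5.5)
The argument rests on the basic fact that an edge of a simple undirected graph is an unordered pair of distinct vertices. Two edges are therefore equal exactly when their endpoint pairs agree, either in the same order or swapped. We treat (a), (b), (c) in that order, each building on the previous one.

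For (a), write $e_{g,h,s,t}=\{(g,h),(sg,th)\}$. Note first that $(g,h)\ne(sg,th)$, since $s\ne 1$. So $e_{g,h,s,t}=e_{g',h',s',t'}$ holds exactly in one of two cases.

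\begin{itemize}
\item[(1)] $(g',h')=(g,h)$ and $(s'g',t'h')=(sg,th)$. Cancelling on the right gives $s'=s$ and $t'=t$.
\item[(2)] $(g',h')=(sg,th)$ and $(s'g',t'h')=(g,h)$. Then $s'sg=g$ and $t'th=h$, so $s'=s^{-1}$ and $t'=t^{-1}$.
\end{itemize}

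Conversely, both quadruples in (ii) give the same edge; in case (2) this uses $s^{-1}\in S$ and $t^{-1}\in T$, which holds because $S,T$ are inverse-closed. This proves (i)$\Leftrightarrow$(ii).

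For (b), we use (a) to check that $\psi$ is well defined. The quadruple $(g,h,s,t)$ maps to $(\{g,sg\},\{h,th\})$. The alternative representative $(sg,th,s^{-1},t^{-1})$ maps to $(\{sg,s^{-1}sg\},\{th,t^{-1}th\})=(\{sg,g\},\{th,h\})$, which is the same pair. For surjectivity, take any $(\{g,sg\},\{h,th\})\in E\Gamma\times E\Sigma$. It is the image of $e_{g,h,s,t}$, and this is an edge of $\Delta$ because $(s,t)\in S\times T$ and $(s,t)\ne 1$.

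For (c), suppose $\{g,sg\}=\{g',s'g'\}$ and $\{h,th\}=\{h',t'h'\}$. Each equality of unordered pairs allows two alignments, so we get four cases.
\begin{itemize}
\item[(1)] In $G$, either $g'=g,\ s'=s$, or $g'=sg,\ s'=s^{-1}$.
\item[(2)] In $H$, either $h'=h,\ t'=t$, or $h'=th,\ t'=t^{-1}$.
\end{itemize}
The four combinations give the following quadruples.
\begin{itemize}
\item[(i)] $(g,h,s,t)$, which is the edge $e_{g,h,s,t}$.
\item[(ii)] $(sg,th,s^{-1},t^{-1})$, which by (a) is again $e_{g,h,s,t}$.
\item[(iii)] $(sg,h,s^{-1},t)$.
\item[(iv)] $(g,th,s,t^{-1})$.
\end{itemize}
Applying (a) to the quadruple $(sg,h,s^{-1},t)$, its partner is $(s^{-1}sg,th,s,t^{-1})=(g,th,s,t^{-1})$. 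Hence (iii) and (iv) give the same edge, $\{(sg,h),(g,th)\}$. This establishes the dichotomy in (c).

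To conclude that $\psi$ is exactly $2$-to-$1$, we also need $e_{g,h,s,t}\ne e_{sg,h,s^{-1},t}$. By (a), equality would force either $(sg,h,s^{-1},t)=(g,h,s,t)$, giving $s=1$, or $(sg,h,s^{-1},t)=(sg,th,s^{-1},t^{-1})$, giving $t=1$. Both are impossible, since $1\notin S$ and $1\notin T$. So every fibre of $\psi$ has exactly two elements.

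The only real care point is this final distinctness check together with keeping the case analysis in (c) organized. Everything else is direct cancellation in the groups.
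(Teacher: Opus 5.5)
Your proof is correct and follows essentially the same route as the paper. Both arguments match the endpoints of unordered pairs and cancel in the groups for (a), reduce well-definedness in (b) to (a), and use the four-alignment case analysis in (c). Your one addition is the explicit check that $e_{g,h,s,t}\ne e_{sg,h,s^{-1},t}$, using $1\notin S$ and $1\notin T$; the paper asserts this without justification, but it is exactly what makes $\psi$ exactly $2$-to-$1$.
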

	
	\begin{proof}
		(a) Suppose that (i) holds.  Then  $(g',h')\in \{(g,h),(sg,th)\}$. If $(g',h') = (g,h)$, then also $(s'g',t'h') = (sg,th)$ by part (i), and we have $(g',h',s',t')=(g,h,s,t)$. On the other hand, if  $(g',h') = (sg,th)$, then also $(s'g',t'h') = (g,h)$ by part (i), and it follows that $s'=s^{-1}$ and $t'=t^{-1}$, so $(g',h',s',t')=(sg,th,s^{-1},t^{-1})$. 
		
		Conversely, suppose that (ii) holds. For the first alternative part (i) requires no proof, while for the second alternative we have $s'g'=s^{-1}(sg)=g$ and $t'h'=t^{-1}(th)=h$, and part (i) holds in this case also.
		
		(b) To see that $\psi$ is well defined, suppose that (a)(i) holds. Then also (a)(ii) holds, and either alternative implies that $\{g,sg\}=\{g',s'g'\}$ and $\{h,th\}=\{h',t'h'\}$, so $(e_{g,h,s,t})\psi=(e_{g',h',s',t'})\psi$. Thus $\psi$ is well defined. Clearly $\psi$ is surjective.
		
		(c) Suppose that $(e_{g,h,s,t})\psi=(e_{g',h',s',t'})\psi$.  Then $\{g,sg\}=\{g',s'g'\}$ and $\{h,th\}=\{h',t'h'\}$ holds by the definition of $\psi$, so $(g',s'g')=(g,sg)$ or $(sg,s)$, and $(h',t'h')=(h,th)$ or $(th,h)$. Suppose also that $e_{g',h',s',t'}\ne e_{g,h,s,t}$. Then it follows from part (a)(ii) that $e_{g',h',s',t'}= e_{g,th,s,t^{-1}}$ or $e_{sg,h,s^{-1},t}$, and by part (a) these are the same edge, but distinct from  $e_{g,h,s,t}$. Thus part (c) is proved.
	\end{proof}
	
	
	Now we discuss the Cayley codes. Our conclusions are not as strong as for Cartesian products.
	As input for a Cayley code construction for $\Delta$ in \eqref{e:cc}, we need a linear code in $\F^{S\times T}$, and it is most convenient to consider this space as the tensor product space $\F^S\otimes \F^T$. The space containing a Cayley code for $\Delta$ is $\F^{E\Delta}$ and the map $\psi$ induces a natural surjective linear homomorphism from $\F^{E\Delta}$ onto $\F^{E\Gamma}\otimes \F^{E\Sigma}$ (acting on bases for these spaces).
	So suppose that we are given linear codes $A \leq \F^S$ and $B\leq\F^T$, and that we have constructed $\C(G,S,A)\leq \F^{E\Gamma}$ and $\C(H,T,B)\leq \F^{E\Sigma}$. Then  $A\otimes B$ is a linear code in $\F^S\otimes\F^T$, and in Proposition~\ref{p:dprod} we identify a certain sub-code of the Cayley code $\C(G\times H, S\times T, A\otimes B)\leq \F^{E\Delta}$.
	Note that, by \eqref{e:cc}, for $\C:= \C(G\times H, S\times T, A\otimes B)$, we have
	\begin{align*}
		\C(G,S,A) &=\{ f^\Gamma\in\F^{E\Gamma} \mid \phi_g^\Gamma\circ f^\Gamma\in A\ \text{for all $g\in G$} \}, \\ 
		\C(H,T,B) &=\{ f^\Sigma\in\F^{E\Sigma} \mid \phi_h^\Sigma\circ f^\Sigma\in B\ \text{for all $h\in H$} \}, \ \text{and}\\ 
		\C &=\{ f^\Delta\in\F^{E\Delta} \mid \phi_{(g,h)}^\Delta\circ f^\Delta\in A\otimes B\ \text{for all $(g,h)\in G\times H$} \}.
	\end{align*}

	\begin{proposition}\label{p:dprod}
		With the notation as above, and $\psi$ as in Lemma~\ref{l:e},
		
		\begin{enumerate}
			\item[(a)] Let $(g,h)\in G\times H$ and let $\phi_g^\Gamma, \phi_h^\Sigma, \phi_{(g,h)}^\Delta$ be as in \eqref{e:phig} for the appropriate graphs. Then $ \phi_{(g,h)}^\Delta\circ \psi = \phi_g^\Gamma\otimes  \phi_h^\Sigma$. 
			
			\item[(b)] Let $f^\Gamma\in \C(G,S,A)$ and $f^\Sigma\in\C(H,T,B)$. Then $\psi\circ (f^\Gamma\otimes f^\Sigma)\in \C(G\times H, S\times T, A\otimes B)$. Thus $\C(G\times H, S\times T, A\otimes B)$ contains the preimage under $\psi$ of $\C(G,S,A)\otimes \C(H,T,B)$.
		\end{enumerate}
		Moreover, the second assertion of Theorem~$\ref{t:cprod}$ holds.
	\end{proposition}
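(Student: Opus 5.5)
The plan is to reduce everything to the pointwise identity in part (a), and then read off part (b) and the second assertion of Theorem~\ref{t:cprod} by unwinding definitions.

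\emph{Part (a).} Both sides are maps on $S\times T$, where we identify $\F^S\otimes\F^T$ with $\F^{S\times T}$ and regard $\phi_g^\Gamma\otimes\phi_h^\Sigma$ as the map $(s,t)\mapsto(\{g,sg\},\{h,th\})$ into $E\Gamma\times E\Sigma$. Fix $(s,t)\in S\times T$. By \eqref{e:phig} applied to $\Delta=\Cay(G\times H,S\times T)$, we get $(s,t)\phi^\Delta_{(g,h)}=\{(g,h),(sg,th)\}=e_{g,h,s,t}$. By the definition of $\psi$ in Lemma~\ref{l:e}, this maps to $(\{g,sg\},\{h,th\})=((s)\phi^\Gamma_g,(t)\phi^\Sigma_h)$. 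Lemma~\ref{l:e}(b) guarantees that this is independent of the representation of the edge, so the identity holds.

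\emph{Part (b).} Put $F:=f^\Gamma\otimes f^\Sigma$, viewed as the function $(e,e')\mapsto (e)f^\Gamma\cdot(e')f^\Sigma$ on $E\Gamma\times E\Sigma$. Let $f^\Delta:=\psi\circ F\in\F^{E\Delta}$ be its pullback. For each $(g,h)$, part (a) gives
\[
\phi^\Delta_{(g,h)}\circ f^\Delta=(\phi_g^\Gamma\otimes\phi_h^\Sigma)\circ F .
\]
This is the function $(s,t)\mapsto (s)(\phi_g^\Gamma\circ f^\Gamma)\cdot (t)(\phi_h^\Sigma\circ f^\Sigma)$, that is, the pure tensor $(\phi_g^\Gamma\circ f^\Gamma)\otimes(\phi_h^\Sigma\circ f^\Sigma)$. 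Since $f^\Gamma\in\C(G,S,A)$ and $f^\Sigma\in\C(H,T,B)$, the two factors lie in $A$ and $B$ respectively, so the tensor lies in $A\otimes B$. Hence $f^\Delta\in\C$. Because the map $F\mapsto\psi\circ F$ is linear and $\C$ is a subspace, the image under pullback of all of $\C(G,S,A)\otimes\C(H,T,B)$ lies in $\C$, since that space is spanned by pure tensors. This is what "preimage under $\psi$" means here.

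\emph{Second assertion of Theorem~\ref{t:cprod}.} Take the sub-code $\C_0:=\{\psi\circ F\mid F\in \C(G,S,A)\otimes\C(H,T,B)\}\le\C$. Consider the natural linear map $\F^{E\Delta}\to\F^{E\Gamma}\otimes\F^{E\Sigma}$ induced by $\psi$ on bases, which sends a basis vector $\delta_e$ to $\delta_{(e)\psi}$ and hence sums the two values over each fibre. Since $\psi$ is exactly $2$-to-$1$ by Lemma~\ref{l:e}(c), this map sends $\psi\circ F$ to $2F$. It therefore maps $\C_0$ onto $\C(G,S,A)\otimes\C(H,T,B)$ when $\operatorname{char}\F\ne2$.

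The main obstacle is this last step. In characteristic $2$ the summing map kills $\C_0$, so I would instead use the restriction map: choose one edge in each fibre of $\psi$, that is, a section of $\psi$. Restriction to the chosen edges is a linear map $\F^{E\Delta}\to\F^{E\Gamma\times E\Sigma}$ that sends $\psi\circ F$ to $F$ in every characteristic. This yields the projection onto $\C(G,S,A)\otimes\C(H,T,B)$ uniformly, and it is the version I would present.
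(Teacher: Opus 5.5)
Your proof of (a) and (b) is the paper's argument. You evaluate both sides of (a) at $(s,t)$, then use (a) to rewrite $\phi^\Delta_{(g,h)}\circ\bigl(\psi\circ(f^\Gamma\otimes f^\Sigma)\bigr)$ as $(\phi_g^\Gamma\circ f^\Gamma)\otimes(\phi_h^\Sigma\circ f^\Sigma)\in A\otimes B$. Beyond the paper, you spell out the linearity step from pure tensors to all of $\C(G,S,A)\otimes\C(H,T,B)$. You also correctly read ``preimage'' as the pullback $F\mapsto\psi\circ F$.

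The genuine difference is in the final assertion of Theorem~\ref{t:cprod}. The paper only says it follows from (b) and never names the projection. The natural candidate is the map $\F^{E\Delta}\to\F^{E\Gamma}\otimes\F^{E\Sigma}$ induced by $\psi$ on basis vectors, which the paper introduces just before the proposition. That map sends $\psi\circ F$ to $2F$, because $\psi$ is $2$-to-$1$ (Lemma~\ref{l:e}(c)). So it annihilates the subcode in characteristic $2$, which is exactly the binary setting of most interest.

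Your choice avoids this. You restrict to a transversal $Y$ of the fibres of $\psi$, which is an instance of the paper's own natural projection $\F^{E\Delta}\to\F^{Y}$ from Notation~\ref{not:def}, with $Y$ identified with $E\Gamma\times E\Sigma$ via $\psi$. This restriction is a left inverse of the pullback, so it maps the subcode isomorphically onto $\C(G,S,A)\otimes\C(H,T,B)$ in every characteristic. The only cost is a non-canonical choice of $Y$.

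So your route makes explicit a step the paper leaves implicit, and in characteristic $2$ your fix is genuinely needed.
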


	\begin{proof}
		Let $\C= \C(G\times H, S\times T, A\otimes B)$, as above.
		
		\medskip
		(a)  For $(s,t)\in S\times T$, the image $(s,t) (\phi_{(g,h)}^\Delta\circ \psi)$ is equal to
		\[
		\{ (g,h), (sg, th)\} \psi = \left( \{g,sg\},\ \{h,th\} \right) = (s\phi_g^\Gamma, t\phi_h^\Sigma) = (s,t) (\phi_g^\Gamma\circ\phi_h^\Sigma),
		\]
		proving part (a). 
		
		(b) Let $f=\psi\circ (f^\Gamma\otimes f^\Sigma)$. Then $f\in\F^{E\Delta}$. To show that $f\in\C$ we need to show that $\phi_{(g,h)}^\Delta\circ f\in A\otimes B$ for all $(g,h)\in G\times H$. By part (a), 
		$\phi_{(g,h)}^\Delta\circ f = (\phi_g^\Gamma\otimes  \phi_h^\Sigma) \circ (f^\Gamma\otimes f^\Sigma) = (\phi_g^\Gamma \circ f^\Gamma) \otimes  (\phi_h^\Sigma\circ f^\Sigma)$, which lies in $A\otimes B$ since $f^\Gamma\in \C(G,S,A)$ and $f^\Sigma\in\C(H,T,B)$. Since this holds for all $(g,h)$, we conclude that $f\in\C$. Thus part (b) is  proved. Finally we note that part (b) implies the second assertion of Theorem~\ref{t:cprod}.
	\end{proof}
	
	\subsection{Future directions}\label{s:future}    
	The decompositions of Cayley codes given in Theorem~\ref{thm:disccaycode} and Proposition~\ref{prop:SiBi} allow us, if desired, to focus on symmetric Cayley codes for connected Cayley graphs. On the other hand, we note that the most useful Cayley codes so far discovered have been those in \cite{KW2016} for a family of edge-transitive bipartite Ramanujan graphs for almost simple groups ${\rm PGL}_2(q)$. To understand the role that simple groups might play in the theory we ask:
	
	\begin{question}\label{q1}
		{\rm
			Is there a useful notion of a `normal quotient' of a (finite, symmetric) Cayley code which allows one to restrict to Cayley codes in the case where the group $A(\Gamma)$ in \eqref{e:Aut} for the Cayley graph $\Gamma$ is quasiprimitive or bi-quasiprimitive on vertices?
		}
	\end{question}
	
	The finite Cayley graphs $\Gamma$ for which $A(\Gamma)$ is quasiprimitive and bi-quasiprimitive on vertices are Cayley graphs for characteristically simple groups, that is,  groups of the form $T^k$ for a simple group $T$ and integer $k\geq1$ (\cite[Section 3, especially Theorem 4]{P99}). Recall that a Cayley graph $\Gamma$ is normal edge-transitive if $A(\Gamma)$ is edge-transitive. The open question \cite[Question 3]{P99} asks for a useful description of normal edge-transitive  Cayley graphs for finite characteristically simple groups. One might extend this question and ask:
	
	\begin{question}\label{q2}
		{\rm
			Is there a useful description of symmetric Cayley codes corresponding to normal edge-transitive Cayley graphs for finite characteristically simple groups?
		}
	\end{question}
	
	In Subsections~\ref{s:cartprod} and~\ref{s:dirprod} we analysed Cayley codes for Cartesian products and direct products of Cayley graphs. There are other common graph products, such as the strong product and the lexicographic product \cite[Section 7.15]{GR}. 
	
	\begin{question}\label{q3}
		{\rm
			How does the Cayley code construction behave under the strong product and the lexicographic product of  Cayley graphs?
		}
	\end{question}

	In Proposition~\ref{p:dprod}(b) we identified a subcode of the `direct product Cayley code'  
	$\C(G\times H, S\times T, A\otimes B)$ that projected to a tensor product of the Cayley codes for the factors. 
	
	\begin{question}\label{q4}
		{\rm
			Find families of examples of Cayley codes where $\C(G\times H, S\times T, A\otimes B)$ is equal to the subcode in Proposition~\ref{p:dprod}(b), and also families of examples where it is strictly larger. 
		}
	\end{question}
	
	We hope that such examples could give more insight into the structure of Cayley codes under the direct product construction.

	\subsection*{Acknowledgements and Declarations}
	The second author expresses her gratitude to Alex Lubotzky for bringing to our attention his work on Cayley codes, and the relevance of her work on normal edge-transitive Cayley graphs. 
	We are also grateful to the Centre for the Mathematics of Symmetry and Computation at the University of Western Australia for hospitality which allowed the three authors to participate in the 2023 CMSC Research Retreat where the work on this project started. 
	
	All authors contributed to the study conception and design.  Full drafts of the manuscript were written by Cheryl Praeger and Daniel Rademacher, and all authors commented on previous versions of the manuscript. All authors read and approved the final manuscript.
	
	The authors acknowledge funding for this research as follows: Australian Government Research Training Program (RTP) Scholarship, \texttt{ doi.org/10.82133/C42F-K220} (Arumugam);  Australian Research Coun\-cil Discovery Program Grant DP190100450 (Praeger); SFB-TRR 195 `Symbolic
	Tools in Mathematics and their Application' of the German Research Foundation
	(DFG), Program ID 286237555 (Rademacher).

	\medskip\noindent
	Vishnuram Arumugam  https://orcid.org/0000-0002-5229-2012\\ 
	Cheryl E. Praeger https://orcid.org/0000-0002-0881-7336\\
	Daniel Rademacher https://orcid.org/0009-0004-0638-0873

\end{document}